\newtheorem{theorem}{Theorem}[section]
\newtheorem{lemma}[theorem]{Lemma}
\newtheorem{corollary}[theorem]{Corollary}
\newtheorem{proposition}[theorem]{Proposition}
\newtheorem{definition}[theorem]{Definition}
\theoremstyle{remark}
\newtheorem{example}[theorem]{Example}
\newtheorem{remark}[theorem]{Remark}
\newcommand{\Fp}{{\mathbf F}_p}
\newcommand{\Fq}{{\mathbf F}_q}
\newcommand{\BC}{{\mathbf C}}
\newcommand{\BF}{{\mathbf F}}
\newcommand{\BQ}{{\mathbf Q}}
\newcommand{\BR}{{\mathbf R}}
\newcommand{\BZ}{{\mathbf Z}}
\newcommand{\calC}{{\mathcal C}}
\newcommand{\calO}{{\mathcal O}}
\newcommand{\calS}{{\mathcal S}}
\newcommand{\calOp}{\calO^+}
\newcommand{\Aid}{{\mathfrak A}}
\newcommand{\Bid}{{\mathfrak B}}
\newcommand{\Cid}{{\mathfrak C}}
\newcommand{\did}{{\mathfrak d}}
\newcommand{\Did}{{\mathfrak D}}
\newcommand{\Eid}{{\mathfrak E}}
\newcommand{\fid}{{\mathfrak f}}
\newcommand{\Fid}{{\mathfrak F}}
\newcommand{\mideal}{{\mathfrak m}}
\newcommand{\pid}{{\mathfrak p}}
\newcommand{\Pid}{{\mathfrak P}}
\newcommand{\qid}{{\mathfrak q}}
\newcommand{\Qid}{{\mathfrak Q}}
\newcommand{\alphabar}{\bar{\alpha}}
\newcommand{\betabar}{\bar{\beta}}
\newcommand{\bbar}{\overline{b}}  
\newcommand{\kb}{\bar{k}}
\newcommand{\pibar}{\bar{\pi}}
\newcommand{\Aidbar}{\bar{\Aid}}
\newcommand{\Bidbar}{\bar{\Bid}}
\newcommand{\qidbar}{\bar{\qid}}
\newcommand{\ubar}{\bar{u}}
\newcommand{\xbar}{\bar{x}}
\newcommand{\zbar}{\bar{z}}
\newcommand{\Adual}{\hat{A}}
\newcommand{\Khat}{\hat{K}}
\newcommand{\Rhat}{\hat{R}}
\newcommand{\Zhat}{\hat{\BZ}}
\newcommand{\eps}{\varepsilon}
\newcommand*{\triplesim}{%
      \mathrel{\vcenter{\offinterlineskip
      \hbox{$\sim$}\vskip-.4ex\hbox{$\sim$}\vskip-.4ex\hbox{$\sim$}}}}
\DeclareMathOperator{\End}{End}
\DeclareMathOperator{\Inv}{Inv}
\DeclareMathOperator{\modstar}{mod^*}
\DeclareMathOperator{\reg}{reg}
\DeclareMathOperator{\Pic}{Pic}
\DeclareMathOperator{\Picplus}{\Pic^+}
\DeclareMathOperator{\Prin}{Prin}
\DeclareMathOperator{\Tr}{Tr}
\DeclareMathOperator{\USp}{USp}
\newcommand{\hm}{h^{-}}
\newcommand{\Kp}{K^{+}}
\newcommand{\Rp}{R^{+}}
\newcommand{\Up}{U^{+}}
\newcommand{\Vladut}{Vl\u adu\c t}
\newcommand{\col}{:}
\newcommand{\mybar}[1]{
  \mathchoice
  {#1\llap{$\overline{\phantom{\displaystyle\rm#1}}$}}
  {#1\llap{$\overline{\phantom{\textstyle\rm#1}}$}}
  {#1\llap{$\overline{\phantom{\scriptstyle\rm#1}}$}}
  {#1\llap{$\overline{\phantom{\scriptscriptstyle\rm#1}}$}}
}  
\renewcommand{\bar}{\mybar}
\renewcommand{\hat}{\widehat}
\begin{document}

\title[Principally polarized abelian varieties]
      {Variations in the distribution \\ of principally polarized abelian varieties\\ among isogeny classes}
\author{Everett W.~Howe}
\address{Independent mathematician, 
         San Diego, CA 92104, USA.}
\email{however@alumni.caltech.edu}
\urladdr{http://ewhowe.com}
\date{28 May 2020}

\keywords{Abelian variety, Frobenius eigenvalue, distribution, isogeny, complex multiplication, Katz--Sarnak}

\subjclass[2020]{Primary 11G10; Secondary 11G15, 11G25, 14G15, 14K15, 14K22}


\begin{abstract}
We show that for a large class of rings $R$, the number of principally
polarized abelian varieties over a finite field in a given simple ordinary 
isogeny class and with endomorphism ring $R$ is equal either to~$0$, or to a 
ratio of class numbers associated to $R$, up to some small computable factors.
This class of rings includes the maximal order of the CM~field $K$ associated
to the isogeny class (for which the result was already known), as well as the
order $R$ generated over $\BZ$ by Frobenius and Verschiebung.

For this latter order, we can use results of Louboutin to estimate the 
appropriate ratio of class numbers in terms of the size of the base field
and the Frobenius angles of the isogeny class. The error terms in our estimates
are quite large, but the trigonometric terms in the estimate are suggestive: 
Combined with a result of \Vladut\ on the distribution of Frobenius angles 
of isogeny classes, they give a heuristic argument in support of the theorem
of Katz and Sarnak on the limiting distribution of the multiset of Frobenius
angles for principally polarized abelian varieties of a fixed dimension over
finite fields.
\end{abstract}

\maketitle


\section{Introduction}
\label{S:intro}

In this paper we consider the problem of estimating the number of isomorphism
classes of principally polarized abelian varieties $(A,\lambda)$ such that $A$
lies in a given isogeny class of simple ordinary abelian varieties over a finite
field. We approach this problem by subdividing isogeny classes into their 
\emph{strata}, which are the subsets of an isogeny class consisting of abelian 
varieties sharing the same endomorphism ring. (To avoid awkward locutions,
we will say that a principally polarized variety $(A,\lambda)$ lies in an
isogeny class $\calC$ or a stratum $\calS$ when $A$ lies in $\calC$ or $\calS$.)

Our main result concerns strata corresponding to endomorphism rings $R$ that are
\emph{convenient}. A convenient ring is an order in a CM~field with the 
properties that, first, $R$ is stable under complex conjugation; second, the
real subring $\Rp$ of $R$ is Gorenstein; and third, the trace dual of $R$ is
generated by its pure imaginary elements. (We explain these terms and present
results on convenient rings in Section~\ref{S:convenient}.) If $\calS$ is a 
stratum of an isogeny class corresponding to a convenient order~$R$, we can 
express the number of principally polarized varieties in $\calS$ in terms of the
sizes of the Picard group of $R$ and the narrow Picard group of the maximal real
sub-order $\Rp$ of~$R$; the definitions of these groups are also reviewed in
Section~\ref{S:convenient}.

\begin{theorem}
\label{T:classgroup}
Let $\calS$ be a stratum of an isogeny class of simple ordinary abelian
varieties over a finite field, corresponding to an endomorphism ring $R$. 
Suppose that $R$ is convenient and that the norm map $N_{\Pic}$ from the Picard
group of $R$ to the narrow Picard group of $\Rp$ is surjective. Let $U$ be the 
unit group of $R$ and let $\Up_{>0}$ be the group of totally positive units 
of~$\Rp$. Then the number of varieties $A\in\calS$ that have principal 
polarizations is equal to $\#\ker N_{\Pic}$, and each such $A$ has 
$[\Up_{> 0}\col N(U)]$ principal polarizations up to isomorphism, where $N$ is
the norm map from $R$ to~$\Rp$.
\end{theorem}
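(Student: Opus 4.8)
The plan is to exploit the Deligne–Howe equivalence between the category of ordinary abelian varieties in our isogeny class and the category of lattices in the CM-algebra $K$ stable under the Frobenius $\pi$ and Verschiebung $\bar\pi$, under which an abelian variety $A$ with endomorphism ring $R$ corresponds to an invertible $R$-module, and isomorphism classes of such $A$ correspond to elements of $\Pic(R)$. Under this dictionary, a polarization of $A$ corresponds to a certain kind of $R$-linear map from the lattice to its trace dual, and the polarization being principal corresponds to this map being an isomorphism; positivity of the polarization corresponds to a positivity condition that can be phrased in terms of the real subring $\Rp$ and the totally positive cone. So the first step is to recall this machinery precisely, and to translate "$A \in \calS$ admits a principal polarization" into "the invertible $R$-module $\mathfrak a$ corresponding to $A$ satisfies $\bar{\mathfrak a} \cong \mathfrak a^\vee$ as $R$-modules, and the resulting form can be chosen totally positive."

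Next I would set up the relevant norm map. Since $R$ is convenient, $\Rp$ is Gorenstein and the trace dual $\hat R$ is generated by its pure imaginary elements; this is exactly what is needed to identify $\mathfrak a^\vee$ with $\bar{\mathfrak a}$ twisted by a fixed invertible $R$-module (the different), and to reduce the existence of a principal polarization on the isomorphism class $[\mathfrak a]$ to the condition $N_{\Pic}([\mathfrak a]) = \text{(a fixed class)}$ in $\Picplus(\Rp)$. Combined with the hypothesis that $N_{\Pic}$ is surjective, the fiber of $N_{\Pic}$ over that fixed class is a coset of $\ker N_{\Pic}$, hence has exactly $\#\ker N_{\Pic}$ elements — giving the count of principally polarizable $A \in \calS$. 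The key point here is that surjectivity of $N_{\Pic}$ guarantees the fixed target class is actually hit, so the count is $\#\ker N_{\Pic}$ rather than $0$.

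For the second assertion, fix such an $A$ and count its principal polarizations up to isomorphism. Two principal polarizations $\lambda, \lambda'$ on $A$ are isomorphic precisely when $\lambda' = u \bar u \cdot \lambda$ — i.e. they differ by the image under $N\col U \to \Up$ of a unit of $R$ — while the set of principal polarizations on a fixed $A$ (which is nonempty by the previous step) is a torsor under the group of totally positive units $\Up_{>0}$ of $\Rp$ (any two principal polarizations differ by multiplication by a totally positive unit of $\Rp$, using that the form is determined up to $\Rp^\times$-scaling and positivity cuts this down to the totally positive cone). Hence the number of isomorphism classes of principal polarizations on $A$ is $[\Up_{>0} \col N(U)]$, as claimed.

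The main obstacle I anticipate is the bookkeeping in the first two steps: making the identification $\mathfrak a^\vee \cong \bar{\mathfrak a}$ (up to a fixed twist) genuinely correct requires the full strength of convenience — Gorenstein-ness of $\Rp$ is what makes $\hat{\Rp}$ invertible, and the "pure imaginary generators" hypothesis is what lets one descend the polarization form from $K$ to the real side so that its class lands in $\Picplus(\Rp)$ rather than in some larger or ill-defined object. I would expect the cleanest route is to first prove a lemma isolating exactly which fixed class in $\Picplus(\Rp)$ appears as the target, then feed in surjectivity of $N_{\Pic}$ at the very end; conversely, if one wanted to drop the surjectivity hypothesis, the count would be $\#\ker N_{\Pic}$ or $0$ according to whether that fixed class lies in the image, which is presumably how the "either $0$" clause of the abstract arises in the general theorem.
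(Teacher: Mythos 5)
Your proposal is correct and follows essentially the same route as the paper: Deligne modules with the dual given by the conjugate trace dual, convenience used to twist by the different and land the target class in $\Picplus \Rp$, surjectivity of $N_{\Pic}$ making the principally polarizable classes a coset of $\ker N_{\Pic}$, and the torsor-under-$\Up_{>0}$ plus norms-of-units argument for counting polarizations. The only detail worth making explicit is that convenience implies $R$ itself is Gorenstein, which is what identifies the stratum with $\Pic R$ and makes the different invertible.
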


\begin{corollary}
\label{C:classgroup}
Under the hypotheses of Theorem~\textup{\ref{T:classgroup}}, the total number of
principally polarized varieties $(A,\lambda)$ in the stratum $\calS$, counted up
to isomorphism, is equal to 
\[\frac{1}{[N(U) : (\Up)^2]} \, \frac{\#\Pic R^{\phantom{+}}}{\#\Pic \Rp},\]
where $U$ is the unit group of $R$ and $\Up$ is the unit group of~$\Rp$.
Furthermore, the index $[N(U) : (\Up)^2]$ is equal to either $1$ or $2$, and
is equal to $1$ if $K/\Kp$ is ramified at an odd prime.
\end{corollary}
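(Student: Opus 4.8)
The plan is to feed Theorem~\ref{T:classgroup} into a short bookkeeping computation with class numbers and unit indices, and then to analyze the resulting index $[N(U)\col(\Up)^{2}]$ on its own. Write $r=[\Kp\col\BQ]$ and let $\mu(R)$ denote the group of roots of unity of $R$; it is cyclic of even order $w$, since $-1\in R$. By the theorem the number we want equals $\#\ker N_{\Pic}\cdot[\Up_{>0}\col N(U)]$, and surjectivity of $N_{\Pic}$ gives $\#\ker N_{\Pic}=\#\Pic R/\#\Picplus\Rp$. The first subsidiary step is to relate $\#\Picplus\Rp$ to $\#\Pic\Rp$: the canonical surjection $\Picplus\Rp\to\Pic\Rp$ has kernel $P/P^{+}$, where $P$ is the group of principal invertible fractional ideals of $\Rp$ and $P^{+}$ the subgroup of those admitting a totally positive generator; identifying $P$ with $(\Kp)^{\times}/\Up$, the sign homomorphism $(\Kp)^{\times}\to\{\pm1\}^{r}$ realizes $P/P^{+}$ as $\{\pm1\}^{r}$ modulo the image of $\Up$, so that $P/P^{+}$ has order $2^{r}/[\Up\col\Up_{>0}]$ and $\#\Picplus\Rp=(2^{r}/[\Up\col\Up_{>0}])\cdot\#\Pic\Rp$. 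Since $N(u)=u\overline{u}$ is a totally positive unit of $\Rp$ while $N(u)=u^{2}$ for $u\in\Up$, we also have the chain $(\Up)^{2}\subseteq N(U)\subseteq\Up_{>0}\subseteq\Up$ of finite-index subgroups; combining these facts and telescoping indices gives
\[
\#\ker N_{\Pic}\cdot[\Up_{>0}\col N(U)]=\frac{[\Up\col N(U)]}{2^{r}}\cdot\frac{\#\Pic R}{\#\Pic\Rp}.
\]
By Dirichlet's unit theorem $[\Up\col(\Up)^{2}]=2^{r}$ (the totally real order $\Rp$ has unit group $\{\pm1\}\times\BZ^{r-1}$), so the factor $[\Up\col N(U)]/2^{r}$ equals $1/[N(U)\col(\Up)^{2}]$, which gives the asserted formula.

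It remains to control the index $[N(U)\col(\Up)^{2}]$, which we call $I$. The structural point is that $N\col U\to\Up$ has kernel $\mu(R)$: complex conjugation inverts roots of unity, so $\mu(R)\subseteq\ker N$; conversely, if $u\overline{u}=1$ then every archimedean absolute value of $u$ equals $1$, so $u$ is a root of unity by Kronecker's theorem. Since $N$ kills $\mu(R)$ and $N(\Up)=(\Up)^{2}$, the norm induces an isomorphism $U/(\mu(R)\Up)\cong N(U)/(\Up)^{2}$, so $I=[U\col\mu(R)\Up]$. On the other hand $u\mapsto u/\overline{u}$ is a homomorphism $U\to\mu(R)$ (landing in $\mu(R)$, again by Kronecker) with kernel $\Up$, so $U/\Up$ embeds into the cyclic group $\mu(R)$; its image contains $\{\zeta/\overline{\zeta}\col\zeta\in\mu(R)\}=\mu(R)^{2}$, which has index $2$ in $\mu(R)$. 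Hence $U/\Up$ is cyclic with image either $\mu(R)^{2}$ or $\mu(R)$, and accordingly $I=[U\col\mu(R)\Up]$ is $1$ or $2$.

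For the last assertion, suppose $I=2$. Then $u\mapsto u/\overline{u}$ maps $U$ onto $\mu(R)$, so there is $u_{0}\in U$ for which $\zeta_{0}:=u_{0}/\overline{u_{0}}$ is a primitive $w$-th root of unity. Let $\pid$ be a prime of $\calO_{\Kp}$ ramifying in $K$ and let $\Pid$ be the prime of $\calO_{K}$ above it; it is the unique prime over $\pid$, with $\pid\calO_{K}=\Pid^{2}$ and $\Pid$ of residue degree $1$ over $\pid$. Hence complex conjugation fixes $\Pid$ and acts trivially on $\calO_{K}/\Pid$, so $\overline{u_{0}}\equiv u_{0}\pmod{\Pid}$ and therefore $\zeta_{0}\equiv 1\pmod{\Pid}$, i.e.\ $\Pid\mid(1-\zeta_{0})$. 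A standard cyclotomic computation now shows that $1-\zeta_{0}$ is a unit of $\calO_{K}$ unless the order $w$ of $\zeta_{0}$ is a prime power, and that for $w=\ell^{a}$ the ideal $(1-\zeta_{0})$ is supported entirely over $\ell$ (concretely $N_{\BQ(\zeta_{w})/\BQ}(1-\zeta_{w})=\Phi_{w}(1)$, equal to $1$ or $\ell$ accordingly). Since $w$ is even, the prime-power case forces $w=2^{a}$ and $\ell=2$; in every case $\Pid$ lies over $2$. Thus every prime ramifying in $K/\Kp$ has even residue characteristic, which is precisely the contrapositive of the statement that $I=1$ whenever $K/\Kp$ is ramified at an odd prime.

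The index manipulations of the first paragraph are routine once one keeps track of the fact that each index occurring there is between finite-index subgroups of $\Up$. The step I expect to carry the real content — and to be the only genuine obstacle — is the last one: the identification $[N(U)\col(\Up)^{2}]=[U\col\mu(R)\Up]$ (a Hasse-type unit index for $R$), together with the cyclotomic fact that $1-\zeta$ is a unit unless the order of $\zeta$ is a prime power, is what turns ramification of $K/\Kp$ at an odd prime into the vanishing of this index.
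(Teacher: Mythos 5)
Your argument is correct, but its second half takes a genuinely different route from the paper's. The opening bookkeeping is the same in both: apply Theorem~\ref{T:classgroup}, use surjectivity of $N_{\Pic}$ to get $\#\ker N_{\Pic}=\#\Pic R/\#\Picplus\Rp$, and trade $\#\Picplus\Rp$ for $\#\Pic\Rp$ via the identity $[\Up_{>0}\col(\Up)^2]=\#\Picplus\Rp/\#\Pic\Rp$, which the paper invokes silently and you derive explicitly from the sign map and Dirichlet's unit theorem. The divergence is in the analysis of $[N(U)\col(\Up)^2]$. The paper cites Hasse for the bound ``$1$ or $2$,'' and proves the ramification statement globally: after twisting $u$ by a root of unity so that $\bar{u}/u$ has $2$-power order, it concludes that $K=\Kp(\sqrt{-v})$ or $K=\Kp(\sqrt{-1})$, so $K$ arises from $\Kp$ by adjoining the square root of a unit and odd primes cannot ramify. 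You instead prove the bound directly, via $[N(U)\col(\Up)^2]=[U\col\mu(R)\Up]$ and the embedding $U/\Up\hookrightarrow\mu(R)$, $u\mapsto u/\bar{u}$ --- no appeal to Hasse, and the argument applies verbatim to the possibly non-maximal order $R$ --- and you obtain the ramification statement locally: at a ramified prime, conjugation acts trivially on the residue field, so a generator $\zeta_0$ of $\mu(R)$ lying in the image of $u\mapsto u/\bar{u}$ is congruent to $1$ modulo the prime above, and the evaluation $\Phi_w(1)$ then forces that prime to lie over~$2$. The paper's route is shorter and exhibits the exceptional fields explicitly; yours is self-contained and makes visible exactly how odd ramification kills the index. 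One small point: injectivity of your induced map $U/(\mu(R)\Up)\to N(U)/(\Up)^2$ deserves the one-line check that $N(u)=w^2$ with $w\in\Up$ forces $u/w\in\ker N=\mu(R)$; this follows from facts you already state, so it is not a gap.
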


In Section~\ref{S:PPAVs} we prove these two results and give some reasonably
weak sufficient condition for $N_{\Pic}$ to be surjective.
Special cases of these results are known already; in the most fundamental case,
when $R$ is a maximal order, these results can be obtained from the work of 
Shimura and Taniyama~\cite[\S~14]{ShimuraTaniyama1961}, combined with the theory
of canonical lifts. Other examples occur, for instance, 
in~\cite[\S~8]{LenstraPilaEtAl2002}, \cite[Proposition~2, p.~583]{Howe2004},
and~\cite[Lemma~19]{IonicaThome2020}. But 
none of the previous results we are aware of apply as generally as
Theorem~\ref{T:classgroup} and Corollary~\ref{C:classgroup}.

To every $n$-dimensional abelian variety $A$ over $\Fq$ one associates its 
characteristic polynomial of Frobenius~$f_A$, sometimes called the 
\emph{Weil polynomial} of $A$.  This is a polynomial of degree~$2n$, whose 
multiset of complex roots can be written in the form
\[
\left\{ \sqrt{q} e^{\pm i \theta_j}\right\}_{j=1}^n
\]
for an $n$-tuple $s_A = (\theta_1,\ldots,\theta_n)$ of real numbers, the
\emph{Frobenius angles} of $A$, normalized so that
\begin{equation}
\label{EQ:normal}
0 \le \theta_1 \le \theta_2 \le \cdots \le \theta_n \le \pi.
\end{equation}
The theorem of Honda and Tate~\cite[Th\'eor\`eme 1, p.~96]{Tate1971} gives a complete description of
the set of Weil polynomials. In particular, Tate showed that two abelian 
varieties over $\Fq$ are isogenous if and only if they share the same Weil 
polynomial~\cite{Tate1966}, so it makes sense to speak of the Weil polynomial of
an isogeny class. We see that an isogeny class of abelian varieties over a 
finite field is determined by its Weil polynomial, by the multiset of roots of
its Weil polynomial, and by the multiset of its Frobenius angles. For simple 
ordinary isogeny classes, all of the inequalities in Equation~\eqref{EQ:normal}
are strict.

We will see (Corollary~\ref{C:minimalR}) that the ring $R$ generated over $\BZ$ 
by the Frobenius and Verschiebung of a simple ordinary abelian variety $A$ is 
convenient. We call this ring the \emph{minimal ring} of the isogeny class 
of~$A$, because every endomorphism ring of a variety in $\calC$ contains~$R$, 
and there are varieties in $\calC$ with endomorphism ring equal 
to~$R$~\cite[Theorem 6.1, pp.~550--551]{Waterhouse1969}.
We call the corresponding stratum the \emph{minimal stratum} of the isogeny 
class. Using results of Louboutin, we can (somewhat crudely) estimate the number
of principally polarized varieties in the minimal stratum in terms of the 
Frobenius angles of the isogeny class. Our theorem uses the following notation:
If $\{a_m\}$ and $\{b_m\}$ are two infinite sequences of positive real numbers
indexed by integers~$m$, we write $a_m\triplesim b_m$ to mean that for every
$\eps >0$ there are positive constants $r$ and $s$ such that 
$b_m \le r a_m^{1+\eps}$ and $a_m \le s b_m^{1+\eps}$ for all~$m$.

\begin{theorem}
\label{T:sequence}
Fix an integer $n>0$. For each positive integer $m$, let $\calC_m$ be an isogeny
class of simple $n$-dimensional ordinary abelian varieties over a finite field $\BF_{q_m}$, and
let $R_m$ and $\calS_m$ be the minimal ring and minimal stratum for $\calC_m$.
For each $m$, let $\{\theta_{m,i}\}_{i=1}^n$ be the Frobenius angles 
for~$\calC_m$. Let $P_m$ be the number of principally polarized varieties 
in~$\calS_m$. If $q_m\to\infty$ and if each norm map 
$\Pic R_m \to \Picplus \Rp_m$ is surjective, then
\[
P_m \triplesim q_m^{n(n+1)/4} 
               \prod_{i<j} (\cos \theta_{m,i} - \cos \theta_{m,j}) 
               \prod_{i} \sin \theta_{m,i}.
\]
\end{theorem}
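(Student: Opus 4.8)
The plan is to use Corollary~\ref{C:classgroup} to replace the count $P_m$ by a ratio of class numbers, to recognise the right-hand side of the theorem as the square root of the discriminant of $R_m$ relative to $\Rp_m$, and to estimate that ratio using Louboutin's bounds. I will use repeatedly that $\triplesim$ is unaffected both by multiplying a sequence by one bounded above and below by positive constants and by altering finitely many terms; the two sequences in question are positive, the left one because $P_m\ge 1$ (by Theorem~\ref{T:classgroup} it equals $\#\ker N_{\Pic}\cdot[\Up_{>0}\col N(U)]$, a product of positive integers) and the right one because all the inequalities in~\eqref{EQ:normal} are strict for a simple ordinary class. By Corollary~\ref{C:minimalR} each $R_m$ is convenient, and the surjectivity hypothesis of the theorem supplies the second hypothesis of Corollary~\ref{C:classgroup}, so
\[
P_m=\frac{1}{[N(U_m)\col(\Up_m)^2]}\cdot\frac{\#\Pic R_m}{\#\Pic\Rp_m},\qquad [N(U_m)\col(\Up_m)^2]\in\{1,2\};
\]
so it suffices to show $\#\Pic R_m/\#\Pic\Rp_m$ is $\triplesim$-equivalent to the product on the right of the theorem.

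Let $\pi_m$ be a Frobenius element of $\calC_m$ and $\beta_m=\pi_m+\bar\pi_m$. Then $\Rp_m=\BZ[\beta_m]$, the conjugates of $\beta_m$ are the numbers $2\sqrt{q_m}\cos\theta_{m,i}$, and $R_m=\Rp_m[\pi_m]\cong\Rp_m[x]/(x^2-\beta_m x+q_m)$ is free of rank $2$ over $\Rp_m$ with relative discriminant the ideal $(\beta_m^2-4q_m)$. From $N_{\Kp_m/\BQ}(\beta_m^2-4q_m)=\prod_i(4q_m\cos^2\theta_{m,i}-4q_m)=(-1)^n(4q_m)^n\prod_i\sin^2\theta_{m,i}$, the Vandermonde value $|\operatorname{disc}\Rp_m|=(2\sqrt{q_m})^{n(n-1)}\prod_{i<j}(\cos\theta_{m,i}-\cos\theta_{m,j})^2$, and the tower identity $\operatorname{disc}R_m=N_{\Kp_m/\BQ}(\beta_m^2-4q_m)\cdot(\operatorname{disc}\Rp_m)^2$, a short computation gives
\[
D_m:=\frac{|\operatorname{disc}R_m|}{|\operatorname{disc}\Rp_m|}=2^{n(n+1)}\,q_m^{n(n+1)/2}\prod_i\sin^2\theta_{m,i}\prod_{i<j}(\cos\theta_{m,i}-\cos\theta_{m,j})^2 .
\]
Hence the right-hand side of the theorem equals $2^{-n(n+1)/2}\sqrt{D_m}$, and the goal becomes $\#\Pic R_m/\#\Pic\Rp_m\triplesim\sqrt{D_m}$. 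Note also that $D_m=N_{\Kp_m/\BQ}(\beta_m^2-4q_m)\cdot|\operatorname{disc}\Rp_m|$ is a positive integer, so $|\operatorname{disc}\Rp_m|\le D_m$ and $|\operatorname{disc}R_m|\le D_m^2$.

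Now apply the standard formula expressing $\#\Pic$ of an order through $\#\Pic$ of its integral closure, the unit index, and the unit groups of the quotients modulo the conductor — to $R_m$ (with conductor $\fid_m$ in $\calO_{K_m}$) and to $\Rp_m$ (with conductor $\fid_m^+$ in $\calO_{\Kp_m}$) — and divide:
\[
\frac{\#\Pic R_m}{\#\Pic\Rp_m}=\frac{\#\Pic\calO_{K_m}}{\#\Pic\calO_{\Kp_m}}\cdot\frac{[\calO_{K_m}\col R_m]}{[\calO_{\Kp_m}\col\Rp_m]}\cdot\mu_m ,
\]
where $\mu_m$ is built from a bounded number of ``unit densities'' $\#A^\times/\#A$ — each lying in $(c/\log\log\#A,\,1]$ with $\#A\le D_m^{O_n(1)}$, hence contributing at most a power of $\log\log D_m$ — together with the index $[\calO_{K_m}^\times\col R_m^\times\calO_{\Kp_m}^\times]^{-1}$, which lies in $[1/w_{K_m},1]$ with $w_{K_m}$ bounded in terms of $n$ (as $\varphi(w_{K_m})\le 2n$). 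Since $K_m/\Kp_m$ is a CM extension, the relative analytic class-number formula gives $\#\Pic\calO_{K_m}/\#\Pic\calO_{\Kp_m}=\epsilon_m\sqrt{|d_{K_m}|/|d_{\Kp_m}|}\;L(1,\chi_m)$ with $\chi_m$ the quadratic character of $K_m/\Kp_m$ and $\epsilon_m$ bounded above and below in terms of $n$; and $[\calO_{K_m}\col R_m]/[\calO_{\Kp_m}\col\Rp_m]=\sqrt{D_m\,|d_{\Kp_m}|/|d_{K_m}|}$, so the discriminants of the maximal orders cancel and
\[
\frac{\#\Pic R_m}{\#\Pic\Rp_m}=\mu_m\,\epsilon_m\,L(1,\chi_m)\,\sqrt{D_m} .
\]
The analytic conductor of $\chi_m$ is $\le D_m^{O(1)}$ (because $|d_{K_m}|\le|\operatorname{disc}R_m|\le D_m^2$), so classically $L(1,\chi_m)$ is at most a constant depending on $n$ times a power of $\log D_m$, while Louboutin's Siegel-type lower bound for relative class numbers of CM-fields, uniform over a family of fixed degree, gives $L(1,\chi_m)\gg_{n,\eps}D_m^{-\eps}$ for every $\eps>0$. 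Thus $\mu_m\epsilon_m L(1,\chi_m)$ is absorbed by $\triplesim$, so $\#\Pic R_m/\#\Pic\Rp_m\triplesim\sqrt{D_m}$, and the theorem follows — the cases in which $D_m$ stays bounded involving only finitely many orders $R_m$, where the relation is trivial.

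The main obstacle is the bookkeeping in the third step: one must confirm that the conductor corrections attached to $R_m$ and to $\Rp_m$ assemble into exactly the index ratio $[\calO_{K_m}\col R_m]/[\calO_{\Kp_m}\col\Rp_m]$ needed to convert the relative discriminant $|d_{K_m}|/|d_{\Kp_m}|$ of the maximal orders into $D_m$, leaving over only factors bounded in $n$ and at worst $(\log\log D_m)^{\pm1}$; and one must appeal to an inherently ineffective Siegel-type lower bound for $L(1,\chi_m)$ that is uniform in the family — which is exactly why, as the introduction warns, the resulting error terms are ``quite large.''
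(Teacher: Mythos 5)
Your proposal is correct, and at the top level it follows the same skeleton as the paper: reduce $P_m$ to the minus class number $\#\Pic R_m/\#\Pic\Rp_m$ (up to the factor $[N(U)\col(\Up)^2]\in\{1,2\}$) via Corollary~\ref{C:classgroup}, express the right-hand side as $2^{-n(n+1)/2}\sqrt{|\Delta_{R_m}/\Delta_{\Rp_m}|}$ by the same relative-discriminant/Vandermonde computation the paper records as Theorem~\ref{T:disc}, and then invoke a Brauer--Siegel estimate of Louboutin type. Where you diverge is in the middle step, which the paper isolates as Theorem~\ref{T:B-S-orders}: the paper passes through the Lenstra--Pila--Pomerance formula for $\#\Pic_* R$ (their Lemma~6.3) together with their Corollary~5.8 to compare the unit-group indices $[\hat{\calO}^*\col\hat{R}^*]/[(\hat{\calOp})^*\col(\hat{\Rp})^*]$ with the index ratio $[\calO\col R]/[\calOp\col\Rp]$, and then quotes Louboutin's theorem for maximal orders as a black box at the level of class numbers ($h^-_\calO\triplesim\sqrt{|\Delta_\calO/\Delta_{\calOp}|}$); you instead run the classical conductor exact sequence for $\Pic$ of an order on each side, do the unit-density and unit-index bookkeeping by hand (your factor $\mu_m$, correctly bounded by powers of $\log\log D_m$ and constants depending on $n$), and then use the relative analytic class number formula to reduce everything to $L(1,\chi_m)$, with an elementary upper bound and Louboutin's ineffective Siegel-type lower bound. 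The two routes rest on the same essential analytic input, so neither is stronger; yours makes the $L(1,\chi_m)$ dependence explicit and spells out the degenerate subsequences on which $D_m$ stays bounded (which the paper leaves implicit in the quantification over all convenient orders of fixed degree), while the paper's route delegates the conductor bookkeeping to the cited results and stays entirely at the level of class numbers.
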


The relation indicated by the $\triplesim$ symbol is a \emph{very} rough
comparison of magnitudes, and indeed, if there is an $\eps$ such that
$\lvert \theta_{m,i} - \theta_{m,j}\rvert > \eps$ and 
$\lvert\sin \theta_{m,i}\rvert>\eps$ for all $m$, $i$, and $j$, then the
conclusion of the theorem is equivalent to saying simply that 
$P_m \triplesim q_m^{n(n+1)/4}$. However, if the Frobenius angles of the
sequence of isogeny classes do \emph{not} stay a bounded distance from one 
another and from $0$ and $\pi$, then the trigonometric factors on the right hand
side of the relation do make a difference. We will see examples of this in
Section~\ref{S:examples}.

The trigonometric factors in Theorem~\ref{T:sequence} may have only a tenuous 
influence on the asymptotic predictions of the theorem, but they provided a key
motivation for this work. To explain this, let us consider another approach 
toward estimating the number of principally polarized abelian varieties in an 
isogeny class, an approach that considers the question in terms of limiting 
distributions.

It is well-known that for a fixed positive integer~$n$, the number of 
principally polarized $n$-dimensional abelian varieties over a finite field
$\Fq$ grows like \[2q^{n(n+1)/2}\] as $q\to\infty$, in the sense that the ratio
between the two quantities tends to~$1$; this follows simply from the existence
of an irreducible coarse moduli space for these abelian varieties, together with
the fact that generically a principally polarized abelian variety over a finite 
field has two twists. On the other hand, the number of isogeny classes of 
$n$-dimensional abelian varieties over~$\Fq$ grows like
\begin{equation}
\label{EQ:isogenyclasses}
v_n \frac{\varphi(q)}{q} q^{n(n+1)/4}
\end{equation}
as $q\to\infty$, where $\varphi$ is Euler's totient function and where
\begin{equation}
\label{EQ:vn}
v_n = \frac{2^n}{n!}\, \prod_{j=1}^n \left(\frac{2j}{2j-1}\right)^{n + 1 - j}
\end{equation}
(see~\cite[Theorem~1.1, p.~427]{DiPippoHowe1998}). It follows that the average
number of principally polarized varieties per isogeny class is 
\[
\frac{2q}{v_n\varphi(q)} q^{n(n+1)/4}.
\]
But there is finer information available. To explain this, we require some
notation.

Let $S_n$ be the space of all $n$-tuples $(\theta_j)$ of real numbers
satisfying~\eqref{EQ:normal}. There is a map from $\USp_{2n}(q)$ to $S_n$ that
sends a symplectic matrix $M$ to the multiset of the arguments of the 
eigenvalues of~$M$. Haar measure on $\USp_{2n}(q)$ gives rise to a measure 
$\mu_n$ on~$S_n$; this measure is determined by
\begin{equation}
\label{EQ:AV_measure}
 d\/\mu_n = c_n \prod_{i<j} (\cos \theta_i - \cos \theta_j)^2
             \prod_{i} \sin^2 \theta_i
             \, d\theta_1\,\cdots\, d\theta_n,
\end{equation}
where $c_n = 2^{n^2}/\pi^n$, so that $\mu_n(S_n) = 1$.

We also get a measure on $S_n$ from the principally polarized $n$-dimensional 
abelian varieties over $\Fq$:  For every open set $U$ of $S_n$, we set 
\[
\mu_{n,q}(U) 
  = c_{n,q} \cdot 
    \#\{\text{principally polarized $(A,\lambda)$ such that $s_A\in U$}\},
\]
where $1/c_{n,q}$ is the total number of principally polarized $n$-dimensional 
abelian varieties $(A,\lambda)$ over~$\Fq$, so that $\mu_{n,q}(S) = 1$. Katz
and Sarnak \cite[Theorem~11.3.10, p.~330]{KatzSarnak1999} proved the following:
\begin{theorem}[Katz--Sarnak]
\label{T:KatzSarnak}
Fix a positive integer $n$. As $q\to\infty$ over the prime powers, the measures
$\mu_{n,q}$ converge in measure to $\mu_n$.
\end{theorem}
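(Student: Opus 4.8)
The plan is to deduce the theorem from Deligne's equidistribution theorem for the moduli of principally polarized abelian varieties --- the sheaf-theoretic incarnation of Chebotarev's density theorem. Write $d=n(n+1)/2$ and let $\mathcal{A}_n$ be the Deligne--Mumford stack over $\BZ$ parametrizing $n$-dimensional principally polarized abelian varieties; it is smooth of relative dimension $d$ with geometrically irreducible fibers. Fix a prime $\ell$. On $\mathcal{A}_n\otimes\BZ[1/\ell]$ the universal abelian scheme $\pi$ gives a lisse $\overline{\BQ}_\ell$-sheaf $\mathcal{F}=R^1\pi_*\overline{\BQ}_\ell$ of rank $2n$, pure of weight~$1$, carrying a symplectic autoduality $\mathcal{F}\times\mathcal{F}\to\overline{\BQ}_\ell(-1)$ induced by the polarization. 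For a prime power $q$ prime to $\ell$ and a point $x\in\mathcal{A}_n(\Fq)$ corresponding to a principally polarized variety $(A_x,\lambda_x)$, geometric Frobenius acts on $\mathcal{F}_x$ with characteristic polynomial $f_{A_x}$; dividing by $\sqrt q$ we get an operator with eigenvalues $e^{\pm i\theta_j}$, hence a conjugacy class $\vartheta_x$ in the compact (unitary) symplectic group $\USp_{2n}$. The ``eigenvalue-argument'' map from conjugacy classes of $\USp_{2n}$ to $S_n$ is a homeomorphism, and the push-forward of Haar measure along it is exactly $\mu_n$ (Weyl's integration formula: this is the source of the density in~\eqref{EQ:AV_measure}). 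Since the characters $\operatorname{tr}\Lambda$ of the irreducible representations $\Lambda$ of $\USp_{2n}$ span a sup-norm-dense subspace of the continuous class functions, and the $\mu_{n,q}$ are probability measures, it suffices to show that $\int_{S_n}\operatorname{tr}\Lambda\,d\mu_{n,q}\to\int_{S_n}\operatorname{tr}\Lambda\,d\mu_n$ for every $\Lambda$; the right-hand limit is $1$ when $\Lambda$ is trivial and $0$ otherwise.

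Two ingredients drive the computation. The first is the ``big monodromy'' theorem: the image of the geometric fundamental group of $\mathcal{A}_n\otimes\overline{\BF}_p$ acting on $\mathcal{F}$ is Zariski-dense in $\operatorname{Sp}_{2n}$. Over $\BC$ this is the classical fact that the symplectic action on $H^1$ of the abelian variety makes (the orbifold fundamental group of) $\mathcal{A}_n(\BC)$ surject onto the Zariski-dense subgroup $\operatorname{Sp}_{2n}(\BZ)$; the characteristic-$p$ statement follows by a specialization argument (first for $p$ large, then in general). The second is the Grothendieck--Lefschetz trace formula with Deligne's Weil~II bounds. Let $\Lambda(\mathcal{F})$ be the lisse sheaf obtained by applying $\Lambda$ to $\mathcal{F}$, normalized by a (half-integral) Tate twist so that geometric Frobenius at $x$ acts on the fiber with trace $\operatorname{tr}\Lambda(\vartheta_x)$. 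The trace formula on the stack $\mathcal{A}_n\otimes\overline{\BF}_p$ then reads
\[
\sum_{(A,\lambda)/\Fq}\frac{\operatorname{tr}\Lambda(\vartheta_A)}{\#\operatorname{Aut}(A,\lambda)}
 \;=\; \sum_{i=0}^{2d}(-1)^i\operatorname{tr}\Bigl(\operatorname{Frob}_q\bigm| H^i_c\bigl(\mathcal{A}_n\otimes\overline{\BF}_p,\,\Lambda(\mathcal{F})\bigr)\Bigr).
\]
When $\Lambda$ is nontrivial the top group $H^{2d}_c$ is the space of $\operatorname{Sp}_{2n}$-coinvariants of $\Lambda$, which vanishes by big monodromy, so Weil~II --- which bounds the Frobenius eigenvalues on $H^i_c$ by $q^{i/2}\le q^{d-1/2}$ for $i\le 2d-1$, with $\dim H^i_c$ bounded independently of $q$ --- gives $O_\Lambda(q^{d-1/2})$ for the right-hand side. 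When $\Lambda$ is trivial the left-hand side is the stacky point count $\#\mathcal{A}_n(\Fq)$, and the main term $q^d$ comes from $H^{2d}_c=\overline{\BQ}_\ell(-d)$ by geometric irreducibility, so $\#\mathcal{A}_n(\Fq)=q^d+O(q^{d-1/2})$.

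Finally one passes from stacky counts to honest counts of isomorphism classes, which are what $\mu_{n,q}$ records. A generic principally polarized abelian variety has automorphism group $\{\pm1\}$, while the locus of $(A,\lambda)$ with a larger automorphism group is a proper closed substack, contributing only $O(q^{d-1})$ points over $\Fq$, each with bounded automorphism group. Hence $\#\{(A,\lambda)/\Fq\}=2\#\mathcal{A}_n(\Fq)+O(q^{d-1})=2q^d+O(q^{d-1/2})$; that is, $1/c_{n,q}=2q^d+O(q^{d-1/2})$, consistent with the growth $2q^{n(n+1)/2}$ recorded in Section~\ref{S:intro}. The same comparison applied with $\operatorname{tr}\Lambda(\vartheta_A)$ in place of $1$ (legitimate since $\lvert\operatorname{tr}\Lambda(\vartheta_A)\rvert\le\dim\Lambda$), combined with the trace-formula estimate above, yields $\sum_{(A,\lambda)/\Fq}\operatorname{tr}\Lambda(\vartheta_A)=O_\Lambda(q^{d-1/2})$ for $\Lambda$ nontrivial and $=2q^d+O(q^{d-1/2})$ for $\Lambda$ trivial. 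Therefore $\int_{S_n}\operatorname{tr}\Lambda\,d\mu_{n,q}=c_{n,q}\sum_{(A,\lambda)/\Fq}\operatorname{tr}\Lambda(\vartheta_A)$ tends to $0$ if $\Lambda$ is nontrivial and to $1$ if $\Lambda$ is trivial, matching $\int_{S_n}\operatorname{tr}\Lambda\,d\mu_n$ in each case. Density of the characters plus uniform boundedness of the probability measures $\mu_{n,q}$ upgrade this to weak-$*$ convergence $\mu_{n,q}\to\mu_n$; and because $\mu_n$ has a continuous density, every box has $\mu_n$-negligible boundary, so the convergence holds in the sharper ``in measure'' sense of Katz and Sarnak.

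The one genuinely deep step is the big-monodromy input: identifying the monodromy of the universal family with (a Zariski-dense subgroup of) the full symplectic group, uniformly in the characteristic. The rest is packaging: the Lefschetz trace formula and Weil~II for Deligne--Mumford stacks, and elementary representation theory and integration on $\USp_{2n}$. The main technical nuisance is working directly on the stack $\mathcal{A}_n$; one can instead fix an auxiliary level $N\ge 3$ prime to $p\ell$, replace $\mathcal{A}_n$ by the smooth quasi-projective fine moduli scheme of principally polarized varieties with full symplectic level-$N$ structure (geometrically irreducible over the relevant cyclotomic base, with a free $\operatorname{Sp}_{2n}(\BZ/N)$-action), carry out the cohomological estimates there, and descend. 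One must also check that the errors are uniform enough over a countable determining family of test functions to conclude convergence in whatever precise metric on probability measures one adopts --- but this is automatic here, since for each fixed $\Lambda$ the displayed bound is $O_\Lambda(q^{-1/2})$.
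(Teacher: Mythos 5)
This theorem is not proved in the paper at all: it is quoted, with citation, from Katz and Sarnak's book (their Theorem~11.3.10), so there is no internal argument to compare yours against. Your sketch is essentially the proof from that cited source --- full symplectic monodromy ($\operatorname{Sp}_{2n}$) for the universal family over the moduli space of principally polarized abelian varieties, combined with Deligne's equidistribution machinery (Lefschetz trace formula plus Weil~II, carried out after adding level structure, with the standard bookkeeping for automorphisms/twists and for uniform Betti-number bounds) --- and it is correct in outline, with the genuinely deep inputs correctly identified rather than glossed over.
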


By considering isogeny classes $\calC$ of $n$-dimensional abelian varieties, we
get another family of measures. Given any isogeny class $\calC$, we let 
$s_\calC$ be the $n$-tuple $s_A$ for any $A$ in $\calC$. Given a prime 
power~$q$, we define a measure $\nu_{n,q}$ on $S_n$ by setting
\[
\nu_{n,q}(U) = d_{n,q} \cdot \#\{\text{isogeny classes $\calC$ such that $s_\calC\in U$}\},
\]
where $1/d_{n,q}$ is the total number of isogeny classes of $n$-dimensional
abelian varieties over~$\Fq$, so that $\nu_{n,q}(S_n) = 1$.  
\Vladut~\cite[Theorem~A, p.~128]{Vladut2001} proved that the $\nu_{n,q}$ have a limiting 
distribution as well:

\begin{theorem}[\Vladut]
\label{T:Vladut}
Fix a positive integer $n$. As $q\to\infty$ over the prime powers, the measures
$\nu_{n,q}$ converge in measure to the measure $\nu_n$ defined by
\begin{equation}
\label{EQ:isogeny_measure}
 d\/\nu_n = d_n \prod_{i<j} (\cos \theta_i - \cos \theta_j)
             \prod_{i} \sin \theta_i
             \, d\theta_1\,\cdots\, d\theta_n,
\end{equation}
where
\[
d_n 
 = \frac{1}{v_n\pi^n} 
 = \frac{n!}{(2\pi)^n}\, \prod_{j=1}^n \left(\frac{2j-1}{2j}\right)^{n + 1 - j}.
\]
\end{theorem}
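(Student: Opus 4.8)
The plan is to adapt the counting argument of DiPippo and Howe, keeping track of the Frobenius angles as one goes. By the theorems of Honda and Tate, the isogeny classes of $n$-dimensional abelian varieties over $\Fq$ are in bijection with the degree-$2n$ Weil $q$-polynomials. Writing such a polynomial as $f(X) = X^n h(X + q/X)$ for a monic integral polynomial $h(Y) = Y^n + b_1 Y^{n-1}+\cdots+b_n$, the condition on $h$ is that all of its roots be real and lie in $[-2\sqrt q,\,2\sqrt q]$, together with a $p$-adic condition coming from the Honda--Tate local invariants; for the ordinary classes that condition reduces to $p\nmid b_n$, and DiPippo and Howe show that the non-ordinary classes (whose Weil polynomials have degenerate Newton polygons) and the non-simple classes together contribute only a count of strictly smaller order --- even after one restricts to classes whose angle tuple lies in a fixed subregion of $S_n$. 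Since the roots of $h$ are $2\sqrt q\cos\theta_1,\dots,2\sqrt q\cos\theta_n$, we have $b_k = (2\sqrt q)^k\,\tilde b_k$ with $\tilde b_k = (-1)^k e_k(\cos\theta_1,\dots,\cos\theta_n)$, so the admissible tuples $(b_1,\dots,b_n)$ are precisely the integer points, with last coordinate prime to $p$, of the region obtained by dilating the $k$-th coordinate of the fixed bounded region $\mathcal R_n$ of monic degree-$n$ real polynomials with all roots in $[-1,1]$ by the factor $(2\sqrt q)^k$. The total dilation factor is $\prod_{k=1}^n (2\sqrt q)^k = 2^{n(n+1)/2}\,q^{n(n+1)/4}$.

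The next step is the change of variables between $S_n$ and $\mathcal R_n$. On the open simplex $0<\theta_1<\cdots<\theta_n<\pi$ it factors as $(\theta_j)\mapsto(\cos\theta_j)\mapsto(\tilde b_k)$, a composition of diffeomorphisms onto the interior of $\mathcal R_n$: the first map has Jacobian $\pm\prod_i\sin\theta_i$, and the second is, up to sign, the elementary-symmetric-function map, whose Jacobian is the Vandermonde determinant $\pm\prod_{i<j}(\cos\theta_i-\cos\theta_j)$. Hence Lebesgue measure on $\mathcal R_n$ pulls back, on $S_n$, to a constant multiple of $\prod_{i<j}(\cos\theta_i-\cos\theta_j)\prod_i\sin\theta_i\,d\theta_1\cdots d\theta_n$ --- exactly the shape of $d\nu_n$ --- and its total mass $\mathrm{vol}(\mathcal R_n)$, together with the asymptotic \eqref{EQ:isogenyclasses} for the total number of isogeny classes, pins down the constant $d_n$.

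To conclude, I would invoke equidistribution of lattice points. Because $\mathcal R_n$ is bounded, the number of integer points in its anisotropic dilates is asymptotic to the volume, and these points equidistribute with respect to Lebesgue measure; imposing the congruence $p\nmid b_n$ restricts to a union of residue classes of density $\varphi(q)/q$ and, since this condition thins the count in every part of the region by the same factor, multiplies every such count by $(\varphi(q)/q)(1+o(1))$ uniformly. Therefore, for an open $U\subseteq S_n$ whose preimage in $\mathcal R_n$ has boundary of measure zero, the number of isogeny classes $\calC$ with $s_\calC\in U$ is asymptotic to $\tfrac{\varphi(q)}{q}\,2^{n(n+1)/2}q^{n(n+1)/4}$ times the integral over $U$ of the pulled-back density; dividing by the total count \eqref{EQ:isogenyclasses} cancels both the factor $\tfrac{\varphi(q)}{q}q^{n(n+1)/4}$ and the combinatorial constant and leaves exactly $\nu_n(U)$, giving the asserted convergence of measures.

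The main obstacle is the quantitative lattice-point count, because $\mathcal R_n$ is neither convex nor smooth: its boundary is the discriminant locus, where $h$ acquires a multiple root and the Jacobian of the change of variables degenerates. One needs the number of integer points within $O(1)$ of the boundary of the dilated region --- and of the dilated preimage of $U$ --- to be of strictly smaller order than $q^{n(n+1)/4}$, uniformly enough to localize the count to the sets $U$. This uniform boundary estimate is precisely the technical core of the DiPippo--Howe argument, so the real work is to check that their bounds survive the extra localization; once that is in hand, together with the separate verification that the non-ordinary classes (degenerate Newton polygons) and the non-simple classes (products, which form a thin set) are negligible, the theorem follows.
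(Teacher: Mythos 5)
The paper does not actually prove this statement: Theorem~\ref{T:Vladut} is quoted from \Vladut's paper (Theorem~A there), so there is no internal proof to compare against. Your sketch is essentially the standard route, and essentially the one taken in the cited literature (\Vladut\ building on DiPippo--Howe): parametrize isogeny classes by Weil polynomials via Honda--Tate, reduce to the ordinary ones via the condition $p\nmid b_n$ (which contributes the density $\varphi(q)/q$), identify the admissible coefficient vectors with lattice points in an anisotropically dilated region, and transport Lebesgue measure to $S_n$ through the Jacobians $\prod_i \sin\theta_i$ and the Vandermonde $\prod_{i<j}(\cos\theta_i-\cos\theta_j)$. You also correctly isolate where the real work lies: a lattice-point count with boundary error of strictly smaller order than $q^{n(n+1)/4}$, uniform enough to localize to the preimage of $U$, together with uniformity of the congruence condition across the region; these are precisely the estimates carried out by DiPippo--Howe and refined by \Vladut, so your argument is sound modulo that (acknowledged) verification. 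Two smaller remarks. First, there is no need to discard the non-simple classes: $\nu_{n,q}$ counts \emph{all} isogeny classes, and the bijection between ordinary isogeny classes and admissible polynomials $h$ already includes the non-simple ordinary ones; only the non-ordinary classes need to be shown negligible. Second, your normalization step pins the constant to $d_n = 2^{n(n+1)/2}/v_n$, the unique value making $\nu_n$ a probability measure (for $n=1$ this gives the familiar $\tfrac12\sin\theta\,d\theta$), which does not agree with the displayed value $d_n = 1/(v_n\pi^n)$; since each $\nu_{n,q}$ has total mass $1$, the displayed constant cannot be the correct normalization as stated (check $n=1$: $\int_0^\pi \tfrac{1}{4\pi}\sin\theta\,d\theta \ne 1$), so this discrepancy points to a normalization or typographical slip in the quoted statement rather than to a flaw in your argument.
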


Consider what this means for a region $U\subset S_n$ contained within a small
disk around an $n$-tuple $(\alpha_i)$, where we assume that the $\alpha_i$ are
distinct and that none of them is equal to $0$ or~$\pi$. Suppose $U$ has 
volume~$u$, with respect to the measure $d\theta_1\,\cdots\, d\theta_n$. For 
large $q$, the number of isogeny classes with Frobenius angles in $U$ is 
$\nu_{n,q}(U) / d_{n,q} $, and using Equations~\eqref{EQ:isogenyclasses} 
and~\eqref{EQ:vn} we see that this is roughly equal to
\begin{align*}
\frac{1}{d_{n,q}} \nu_n(U) 
  &\approx \frac{d_n}{d_{n,q}} u 
           \prod_{i<j} (\cos \alpha_i - \cos \alpha_j)  
           \prod_{i} \sin \alpha_i\\
  &\approx \pi^n  u \frac{\varphi(q)}{q} q^{n(n+1)/4} 
           \prod_{i<j} (\cos \alpha_i - \cos \alpha_j)  
           \prod_{i} \sin \alpha_i.
\end{align*}
On the other hand, the number of principally polarized abelian varieties with 
Frobenius angles in $U$ is $\mu_{n,q}(U) / c_{n,q} $, which is roughly
\[
2q^{n(n+1)/2} \mu_n(U) 
   \approx 2q^{n(n+1)/2} \frac{2^{n^2}}{\pi^n} u 
           \prod_{i<j} (\cos \alpha_i - \cos \alpha_j)^2  
           \prod_{i} \sin^2 \alpha_i.
\]
Therefore, for the isogeny classes with Frobenius angles in $U$, the average 
number of principally polarized varieties per isogeny class is roughly
\begin{equation}
\label{EQ:average}
\frac{2^{n^2 + 1}}{\pi^{2n}} \frac{q}{\varphi(q)} q^{n(n+1)/4}
\prod_{i<j} (\cos \alpha_i - \cos \alpha_j)  \prod_{i} \sin \alpha_i.
\end{equation}

Conversely, estimates for the number of principally polarized varieties in a
given isogeny class --- estimates like our Theorem~\ref{T:sequence} --- can be 
combined with \Vladut's result to give a heuristic explanation of the 
Katz--Sarnak theorem. This line of reasoning was the initial motivation that led
to the present work. It is especially suggestive that the trigonometric factors
in the expression~\eqref{EQ:average} match the those that appear
in Theorem~\ref{T:sequence}.

A special case of this type of heuristic argument, which is perhaps
familiar to some readers, concerns elliptic curves.  The case $n=1$ of 
Theorem~\ref{T:KatzSarnak} was proven by Birch~\cite{Birch1968}.  To every
elliptic curve $E/\Fq$ we can associate its trace of Frobenius~$t$, which lies
in the interval $[-2\sqrt{q},2\sqrt{q}]$. Dividing the trace by $2\sqrt{q}$, we
get a \emph{normalized trace} that lies in the interval $[-1,1]$. For each $q$ 
we can consider the counting measure on $[-1,1]$ that tells us what fraction of
the elliptic curves over $\Fq$ have their normalized traces lying in a given
set. Birch proved that these counting measures converge in measure to the
`semicircular' measure, that is, the measure associated to the differential 
$(2/\pi) \sqrt{1-x^2} \, dx.$  (This is equivalent to the measure 
$\mu_1 = (2/\pi) \sin^2\theta \, d\theta$ on $S_1$, since $x = \cos \theta$.)

Now, if $t$ is an integer in the interval $[-2\sqrt{q},2\sqrt{q}]$ and if 
$(t,q) = 1$, then the number of elliptic curves over $\Fq$ with trace $t$ is 
$H(t^2 - 4q)$, where $H$ denotes the Kronecker class number. But $H(-n)$ grows
roughly as~$\sqrt{n}$; more precisely, for every $\eps>0$ there are positive 
constants $c$ and $d$ such that
\[
c n^{1/2-\eps} < H(-n) < d n^{1/2+\eps}
\]
for all positive $n\equiv 0,3\bmod 4$, so that $H(-n)\triplesim n^{1/2}$ for 
these $n$ (compare~\cite[Proposition~1.8, p.~656]{Lenstra1987}), 
and the average value of $H(-n)/\sqrt{n}$ for discriminants $n$ in quite small
intervals is $\pi/6$ (see~\cite[Theorem~2, p.~722]{Bykovskii1997}). Thus it seems reasonable to 
expect that the number of elliptic curves over $\Fq$ with trace $t$ will be
about $c\sqrt{4q - t^2}$ on average, for some constant $c$. Scaling this down,
we find that for any $x$ in $[-1,1]$, we expect there to be about 
$c'\sqrt{1-x^2}\,\Delta x$ elliptic curves over $\Fq$ having scaled traces in a 
small interval of size $\Delta x$ near $x$. As $q$ increases the constant $c'$ 
will have to tend to $2/\pi$, and we find that we are led to believe that the
counting measures should converge to the semi-circular measure.

%
%

(Gekeler~\cite{Gekeler2003} shows how the crude approximation that 
``$H(n)$ grows like $\sqrt{n}$'' can be modified with local factors in order to
make this interpretation of Birch's result more rigorous, at least in the case 
of finite prime fields.  Achter and Gordon~\cite{AchterGordon2017} provide an
alternate explanation for Gekeler's work, and extend it to arbitrary finite 
fields.)

Unfortunately, there seems to be little hope of turning this heuristic argument
into an actual proof of Theorem~\ref{T:KatzSarnak}. We find it interesting,
nevertheless, that the trigonometric factors in the measure given by
Equation~\eqref{EQ:AV_measure} get split evenly between the measure defined by 
Equation~\eqref{EQ:isogeny_measure} and the approximation in 
Theorem~\ref{T:sequence}.

The structure of this paper is as follows:
In Section~\ref{S:convenient} we explore the properties of convenient orders,
give some examples, and define a norm map from the invertible ideals of a
convenient order to the invertible ideals of its real suborder.
In Section~\ref{S:isogenyclasses} we look at convenient orders related to
isogeny classes of abelian varieties.
In Section~\ref{S:PPAVs} we use Deligne's equivalence~\cite{Deligne1969} between
the category of ordinary abelian varieties over a finite field and the category
of Deligne modules~\cite{Howe1995} to prove Theorem~\ref{T:classgroup} and
Corollary~\ref{C:classgroup}.
In Section~\ref{S:discriminants} we review a theorem of 
Louboutin~\cite{Louboutin2006} on minus class numbers of CM~fields and extend it
to apply to convenient orders. We apply the theorem to the minimal orders of 
isogeny classes and obtain Theorem~\ref{T:sequence}.
In Section~\ref{S:warnings} we give some examples that show that while the
\emph{average} number of principally polarized varieties in a given isogeny 
class is given by Equation~\eqref{EQ:average}, there are isogeny classes for 
which this number is significantly larger than the average value. 
Finally, in Section~\ref{S:examples} we give examples showing that the
trigonometric terms in Theorem~\ref{T:sequence} are necessary.


\section*{Acknowledgments}
The basic ideas in this paper first appeared in an email~\cite{Howe2000}
written by the author to Nick Katz in the year 2000, the contents of which have
been shared with a number of researchers over the years and presented in several
conference and seminar talks. (This letter is reproduced as an appendix to this
paper.) The author is grateful to Jeff Achter for his 
continued interest in this work and for his encouragement to write it down more
formally. Jeff Achter and Stefano Marseglia had helpful comments on an early draft of
this paper, and Marseglia suggested Proposition~\ref{P:Gorenstein}, which 
allowed for a simplification in the definition of convenient orders; the author
thanks them both for their help and kindness.


\section{Convenient orders}
\label{S:convenient}

In this section we define convenient orders and prove some results about them.
The definition involves the concept of \emph{Gorenstein rings}.
Most of what we will need to know about Gorenstein rings can be found in the
paper of Picavet-L'Hermitte~\cite{PicavetLHermitte1987}.
In particular, we will use the following facts:
\begin{enumerate}
\item An order $R$ in a number field $K$ is Gorenstein if and only if its trace
      dual is invertible as a fractional $R$-ideal~\cite[Proposition~4, p.~20]{PicavetLHermitte1987}; 
      here the \emph{trace dual} $R^\dagger$ of $R$ is the set of elements $x\in K$ such that
      ${\Tr_{K/\BQ}(xR)\subseteq\BZ}$.
\item An order $R$ in a number field $K$ is Gorenstein if and only if every
      fractional $R$-ideal $\Aid$ with $\End\Aid = R$ is 
      invertible~\cite[Proposition~4, p.~20]{PicavetLHermitte1987}.
\item A ring that is a complete intersection over $\BZ$ is 
      Gorenstein~\cite[Theorem~21.3, p.~171]{Matsumura1986}, and in particular 
      every monogenic order $\BZ[\alpha]$ is Gorenstein.
\end{enumerate}

Let $K$ be a CM~field, that is, a totally imaginary quadratic extension of
a totally real number field $\Kp$. We refer to the nontrivial involution
$x\mapsto \xbar$ of $K/\Kp$ as \emph{complex conjugation}, and we say that an
element of $K$ is \emph{pure imaginary} if it is negated by complex conjugation.

\begin{definition}
\label{D:convenient}
We call an order $R$ in $K$ \emph{convenient} if it satisfies the following
properties\textup{:}
\begin{enumerate}
\item \label{conv1} $R$ is stable under complex conjugation\textup{;}
\item \label{conv2} the order $\Rp := R\cap \Kp$ of $\Kp$ is Gorenstein\textup{;}
\item \label{conv3} the trace dual $R^\dagger$ of $R$ is generated \textup(as a 
                    fractional $R$-ideal\textup) by its pure imaginary elements.
\end{enumerate}
We call the ring $\Rp$ from property~\eqref{conv2} the \emph{real subring} of~$R$.
\end{definition}

\begin{proposition}
\label{P:Gorenstein}
Every convenient order is Gorenstein.
\end{proposition}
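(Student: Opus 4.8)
The plan is to invoke the first of the facts about Gorenstein orders recalled above: an order is Gorenstein if and only if its trace dual is invertible as a fractional ideal. So everything reduces to producing an invertible fractional ideal equal to $R^\dagger$, and the three defining properties of a convenient order are exactly what is needed for this.

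Fix a nonzero pure imaginary element $\delta\in K$; then $K=\Kp(\delta)$ and the pure imaginary elements of $K$ are exactly those of the form $a\delta$ with $a\in\Kp$. By property~\eqref{conv1}, $R$ --- and hence $R^\dagger$ --- is stable under complex conjugation, and $\Rp:=R\cap\Kp$ is an order in $\Kp$. Let $P$ be the set of pure imaginary elements of $R^\dagger$. Since $\Rp\subseteq R$ and $R^\dagger$ is an $R$-module, $P$ is an $\Rp$-submodule of $\Kp\delta$, nonzero because $R^\dagger=R\cdot P$ by property~\eqref{conv3}; hence $P=\Aid\delta$ for some fractional $\Rp$-ideal $\Aid$. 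Property~\eqref{conv3} then says exactly that $R^\dagger=\delta\,(\Aid R)$, where $\Aid R$ denotes the fractional $R$-ideal generated by $\Aid$. Since $\delta$ is a unit of $K$, $R^\dagger$ is invertible over $R$ if and only if $\Aid R$ is, and $\Aid R$ is invertible over $R$ as soon as $\Aid$ is invertible over $\Rp$ (if $\Aid\Bid=\Rp$ then $(\Aid R)(\Bid R)=R$). Thus the proposition reduces to showing that $\Aid$ is an invertible fractional ideal of $\Rp$.

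Because $\Rp$ is Gorenstein by property~\eqref{conv2}, the second recalled fact reduces this last claim to showing $\End_{\Rp}\Aid=\Rp$, i.e. $(\Aid:\Aid)=\Rp$. For this I would use the auxiliary identity $(R^\dagger:R^\dagger)=R$, valid for every order $R$: it follows from biduality of lattices, $(R^\dagger)^\dagger=R$ under the nondegenerate $\BQ$-bilinear pairing $(x,y)\mapsto\Tr_{K/\BQ}(xy)$, since $yR^\dagger\subseteq R^\dagger$ then forces $yR\subseteq(R^\dagger)^\dagger=R$. Now if $y\in\Kp$ satisfies $y\Aid\subseteq\Aid$, then $yP=y\Aid\delta\subseteq\Aid\delta=P$, so $yR^\dagger=y(RP)=R(yP)\subseteq RP=R^\dagger$, giving $y\in(R^\dagger:R^\dagger)\cap\Kp=R\cap\Kp=\Rp$. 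Hence $\End_{\Rp}\Aid=\Rp$, so $\Aid$ is invertible over $\Rp$, so $\Aid R$ and therefore $R^\dagger=\delta\,(\Aid R)$ are invertible over $R$, and $R$ is Gorenstein.

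The one genuinely non-formal ingredient is the implication $\End_{\Rp}\Aid=\Rp\Rightarrow\Aid$ invertible, which is precisely where the Gorenstein hypothesis on $\Rp$ (property~\eqref{conv2}) enters; the remaining pieces --- identifying $P=\Aid\delta$, reading property~\eqref{conv3} as the factorization $R^\dagger=\delta\,(\Aid R)$, and the endomorphism computation via $(R^\dagger:R^\dagger)=R$ --- are routine. I do not anticipate a serious obstacle; the only thing to keep straight is that $\Aid$ is a fractional ideal over $\Rp$ while $R^\dagger$ is a fractional ideal over $R$, and that extending an invertible $\Rp$-ideal to $R$ preserves invertibility.
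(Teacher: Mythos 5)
Your argument is correct and follows essentially the same route as the paper's proof: your ideal $\Aid=\delta^{-1}P$ is exactly the paper's $I=(\iota R)^\dagger\cap\Kp$ (with the pure imaginary element allowed to lie in $K$ rather than in $R$), and in both cases condition~(3) gives $R^\dagger=\delta(\Aid R)$, the multiplier ring $(\Aid\col\Aid)=\Rp$ is deduced from $(R^\dagger\col R^\dagger)=R$, and the Gorenstein hypothesis on $\Rp$ then yields invertibility of $\Aid$, which is transported up to $R$ via $(\Aid R)(\Bid R)=R$. No gaps; the two proofs differ only in bookkeeping.
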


\begin{proof}
Let $\iota$ be a pure imaginary element of $R$. By assumption $R^\dagger$ is 
generated by its pure imaginary elements, so 
$\iota^{-1} R^\dagger = (\iota R)^\dagger$ is generated by its totally real
elements. Since clearly $(\iota R \col \iota R) = R$ we have 
$((\iota R)^\dagger \col (\iota R)^\dagger) = R$. Let 
$I$ be the fractional $\Rp$-ideal $(\iota R)^\dagger\cap\Kp$. Then 
$IR = (\iota R)^\dagger$ because $(\iota R)^\dagger$ is generated by its totally
real elements, and since $((\iota R)^\dagger \col (\iota R)^\dagger) = R$ we
must have $(I\col I) = \Rp$. By assumption, $\Rp$ is Gorenstein, and therefore
$I$ is invertible, so there is a fractional  $\Rp$-ideal $J$ with $IJ = \Rp$. 
Then we have $(IR)(JR) = R$, so $IR = (\iota R)^\dagger$ is an invertible 
fractional $R$-ideal. It follows that $R^\dagger$ is invertible, so $R$ is
Gorenstein.
\end{proof}

\begin{proposition}
\label{P:maximal}
The maximal order $\calO$ of $K$ is convenient. 
\end{proposition}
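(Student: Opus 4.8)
The plan is to verify the three defining properties of a convenient order for $R=\calO$, the maximal order of $K$. Property~\eqref{conv1}, stability under complex conjugation, is immediate: the ring of integers of a number field is preserved by every automorphism, so in particular by the nontrivial element of $\mathrm{Gal}(K/\Kp)$. Property~\eqref{conv2} is equally direct: $\calO\cap\Kp$ is the ring of integers $\calOp$ of $\Kp$, and the maximal order of any number field is Dedekind, hence Gorenstein (every fractional ideal, including the trace dual $\did_{\Kp}^{-1}$, is invertible).

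The substantive point is property~\eqref{conv3}: the trace dual $\calO^\dagger$ should be generated as a fractional $\calO$-ideal by its pure imaginary elements. Here $\calO^\dagger = \did_{K/\BQ}^{-1}$, the inverse different of $K/\BQ$. The key structural fact I would use is the tower formula for the different: $\did_{K/\BQ} = \did_{K/\Kp}\cdot(\did_{\Kp/\BQ}\calO)$, so $\calO^\dagger = \did_{K/\Kp}^{-1}\cdot(\did_{\Kp/\BQ}^{-1}\calO)$. Since $\did_{\Kp/\BQ}^{-1}\calO$ is an extension of an ideal of $\Kp$, it is generated by totally real elements, and hence generated by pure imaginary elements after multiplying by any fixed pure imaginary element of $K$. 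So it suffices to show that $\did_{K/\Kp}^{-1}$ is generated by pure imaginary elements — or, after scaling, that $\did_{K/\Kp}$ is. The plan is to write $K=\Kp(\sqrt{d})$ for a totally negative $d\in\calOp$, so that $\iota:=\sqrt d$ is pure imaginary, and then to locate $\did_{K/\Kp}$ explicitly. Because $K/\Kp$ is quadratic, $\did_{K/\Kp}$ equals the $\calO$-ideal generated by $g'(\iota)=2\iota$ times a correction coming from the conductor of $\calOp[\iota]$ in $\calO$; more precisely, using that $\calO^\dagger$ over $\calOp$ localizes prime by prime, one reduces to the semilocal statement and checks that at each prime of $\Kp$ the local different of the quadratic extension is generated by a pure imaginary element (a uniformizer chosen inside the pure imaginary part, or $2\iota$ times a unit). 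Assembling these local generators gives a pure imaginary generator, or a pair of such, for $\did_{K/\Kp}$ globally.

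I expect the main obstacle to be the ramified primes of $K/\Kp$, especially those above $2$, where $\calO$ need not equal $\calOp[\iota]$ and the different is not simply $(2\iota)$. The cleanest route is probably to invoke the fact that $K=\Kp\oplus\Kp\iota$ as $\Kp$-vector spaces is \emph{not} in general a decomposition of $\calO$ as $\calOp$-modules, but that one can still exploit the $\pm1$-eigenspace decomposition of $\calO^\dagger$ under complex conjugation: writing $\calO^\dagger = \calO^{\dagger,+}\oplus\calO^{\dagger,-}$, one has $\calO^{\dagger,+}$ and $\iota^{-1}\calO^{\dagger,-}$ both contained in $\Kp$, and the claim becomes that $\calO^{\dagger}$ is generated over $\calO$ by $\calO^{\dagger,-}$. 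Since $\mathrm{Tr}_{K/\Kp}$ pairs the $+$ and $-$ eigenspaces, one can show $\calO^{\dagger,+}\subseteq \calO\cdot\calO^{\dagger,-}$ by a trace-pairing argument: any $x\in\calO^{\dagger,+}$ satisfies $\iota x\in\calO^{\dagger,-}$ provided $\iota\in\calO^\times$ locally, which fails exactly at ramified primes, where a more careful local computation is needed. A slicker alternative, which I would try first, is to note that $\calO^\dagger$ is a projective rank-one $\calO$-module stable under complex conjugation, hence (being locally free) admits, locally at each prime, a generator that is either totally real or pure imaginary; multiplying a totally real local generator by $\iota$ converts it, showing every localization of $\calO^\dagger$ is generated by its pure imaginary elements, and a standard patching argument promotes this to the global statement. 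Either way, the heart of the matter is the purely local analysis at the ramified primes, and once that is dispatched the proposition follows.
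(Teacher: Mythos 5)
Your treatment of conditions \eqref{conv1} and \eqref{conv2}, and your reduction via the tower formula $\did_{K/\BQ}=\did_{K/\Kp}\cdot\did_{\Kp/\BQ}\calO$ to the statement that the relative different $\did_{K/\Kp}$ (equivalently its inverse) is generated by pure imaginary elements, coincide with the paper. But the proposal stops exactly where the content begins: the behaviour at ramified primes, especially above $2$, is flagged as ``the main obstacle'' and never resolved, and neither fallback closes it. The eigenspace/trace-pairing argument is conceded to fail at ramified primes. The ``slicker alternative'' is actually unsound, for two separate reasons. First, its key claim --- that a conjugation-stable invertible $\calO$-ideal admits, locally at each prime, a generator that is totally real or pure imaginary --- is false: in $K=\BQ(i)$, $\calO=\BZ[i]$, the ideal $(1+i)$ is conjugation-stable and invertible, but every totally real and every pure imaginary element of $K$ has even valuation at the ramified prime above $2$, whereas a local generator must have valuation $1$; accordingly the pure imaginary (and the real) elements of $(1+i)$ generate only $(2)$. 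So no argument using only invertibility and conjugation-stability can succeed; something specific to the trace dual must be used. Second, even where a totally real local generator exists, multiplying it by $\iota$ gives a generator of $\iota$ times the localized ideal, not of the ideal itself, unless $\iota$ is a local unit --- and at every odd ramified prime every pure imaginary element is a non-unit, so the conversion trick fails precisely at the primes in question.

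The paper's proof supplies the missing ingredient and avoids local analysis altogether: by the standard theorem on differents (Narkiewicz, Theorem~4.16; Neukirch, Theorem~III.2.5), the relative different $\did_{K/\Kp}$ is generated by the elements $f_\alpha'(\alpha)$ as $\alpha$ ranges over $\calO\setminus\calOp$, where $f_\alpha$ is the minimal polynomial of $\alpha$ over $\Kp$; since $K/\Kp$ is quadratic, $f_\alpha'(\alpha)=\alpha-\alphabar$, which is pure imaginary. That single citation handles the ramified primes above $2$ along with everything else. If you prefer to complete your own route, you would have to exhibit, by an explicit case analysis of quadratic extensions of $2$-adic local fields, a pure imaginary generator of the local different at each ramified prime above $2$ --- considerably more work than the global argument, and work your proposal has not done.
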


\begin{proof}
The maximal order $\calO$ clearly is stable under complex conjugation, and the
real suborder $\calOp$ is the maximal order of $\Kp$. Maximal orders are 
Gorenstein (because their trace duals are invertible), so $\calO$ satisfies the
first two conditions of Definition~\ref{D:convenient}. For the third, we note
that the trace dual of $\calO$ is generated by pure imaginary elements if and
only if its inverse --- the different of $\calO$ --- is generated by pure 
imaginary elements. Now, the different of $\calO$ is the product of the 
different of $\calOp$ and the relative different of $\calO$ over $\calOp$, so it
suffices to show that the relative different can be generated by pure imaginary 
elements. We know (\cite[Theorem~4.16, p.~151]{Narkiewicz2004}, 
\cite[Theorem~2.5, p.~198]{Neukirch1999}) that the relative different is 
generated by the elements $f_\alpha'(\alpha)$ for all 
$\alpha\in\calO\setminus\calOp$, where $f_\alpha$ is the minimal polynomial of 
$\alpha$ over $\Kp$ and $f_\alpha'$ is the derivative of~$f_\alpha$. But
$f_\alpha'(\alpha)$ is equal to $\alpha-\alphabar$, which is clearly pure
imaginary.
\end{proof}

\begin{proposition}
\label{P:convenient}
Let $R$ be an order in $K$ that is stable under complex conjugation
and whose real subring $\Rp$ of $R$ is Gorenstein.
If there are elements $\alpha$ and $\beta$ of $K$ and invertible
fractional ideals $\Aid$ and $\Bid$ of $\Rp$ such that
$R = \Aid\alpha \oplus \Bid\beta$, then $R$ is convenient.
\end{proposition}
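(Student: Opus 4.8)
The plan is to verify property~\eqref{conv3} of Definition~\ref{D:convenient} for $R$, since properties~\eqref{conv1} and~\eqref{conv2} are among the hypotheses; that is, I must show that the trace dual $R^\dagger$ is generated, as a fractional $R$-ideal, by its pure imaginary elements. This is an equality of two finitely generated $\Rp$-modules --- namely $R^\dagger$ and the $R$-submodule it generates --- so it may be checked after localizing at an arbitrary maximal ideal $\pid$ of $\Rp$. I would begin by recording the routine facts that localization over $\Rp$ commutes with forming the relative trace dual $\{x\in K:\Tr_{K/\Kp}(xR)\subseteq\Rp\}$, with complex conjugation, and with passing to pure imaginary elements, so that it genuinely suffices to treat $R_\pid$ over $\Rp_\pid$.

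The key local structure comes from the hypothesis $R=\Aid\alpha\oplus\Bid\beta$. Since $\Aid$ and $\Bid$ are invertible $\Rp$-ideals and $\Rp_\pid$ is local, $\Aid_\pid$ and $\Bid_\pid$ are principal, so $R_\pid$ is a \emph{free} $\Rp_\pid$-module of rank $2$. Because $R$ is a ring, $1\in R_\pid$ is a unimodular vector (otherwise Nakayama's lemma would force $R_\pid=0$), so $1$ extends to an $\Rp_\pid$-basis $\{1,\gamma\}$ of $R_\pid$; thus $R_\pid=\Rp_\pid[\gamma]$ is monogenic over $\Rp_\pid$, with $\gamma\notin\Kp$, and the characteristic polynomial $g$ of $\gamma$ over $\Kp$ lies in $\Rp_\pid[X]$ (because $\gamma^2\in R_\pid=\Rp_\pid\oplus\Rp_\pid\gamma$) and satisfies $g'(\gamma)=2\gamma-(\gamma+\bar{\gamma})=\gamma-\bar{\gamma}$. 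For a monogenic extension the relative trace dual is the principal fractional $R_\pid$-ideal $g'(\gamma)^{-1}R_\pid$ --- this is the same computation that produces the formula for the relative different used in the proof of Proposition~\ref{P:maximal} --- so it is generated by the pure imaginary element $\gamma-\bar{\gamma}$.

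Finally I would fold in the real suborder using property~\eqref{conv2}: because $\Rp$ is Gorenstein, $\Rp^\dagger$ is invertible, so $(\Rp^\dagger)_\pid$ is a principal fractional ideal, generated by some $c_0\in\Kp^{\times}$. From the tower identity $\Tr_{K/\BQ}=\Tr_{\Kp/\BQ}\circ\Tr_{K/\Kp}$ together with the invertibility of $\Rp^\dagger$ one gets $R^\dagger=\Rp^\dagger\cdot\{x\in K:\Tr_{K/\Kp}(xR)\subseteq\Rp\}$, and hence $(R^\dagger)_\pid = c_0\,g'(\gamma)^{-1}R_\pid$: a principal $R_\pid$-ideal whose generator $c_0/(\gamma-\bar{\gamma})$ is pure imaginary, being the product of an element of $\Kp$ with a pure imaginary element. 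So $(R^\dagger)_\pid$ is generated over $R_\pid$ by a pure imaginary element for every $\pid$, which yields the conclusion globally. The step that needs genuine care, rather than bookkeeping, is the reduction to the local ring: one must check that ``generated by pure imaginary elements'' is a local property and that the relative trace dual commutes with localization over $\Rp$ (for instance via the identification with $\Hom_\Rp(R,\Rp)$). Once $R_\pid$ is identified as monogenic over $\Rp_\pid$, everything else is a short computation paralleling the maximal-order case treated in Proposition~\ref{P:maximal}.
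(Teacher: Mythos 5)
Your argument is correct, but it follows a genuinely different route from the paper's. Both proofs rest on the same skeleton: the factorization $R^\dagger=(\Rp)^\dagger\,\Did$ of the trace dual into the trace dual of $\Rp$ and the relative trace dual $\Did$ of $R$ over $\Rp$ (which, as you rightly note, uses the invertibility of $(\Rp)^\dagger$ furnished by the Gorenstein hypothesis --- the paper asserts this factorization without comment), together with the observation that $(\Rp)^\dagger$ is generated by totally real elements; so everything reduces to showing that $\Did$ is generated by pure imaginary elements. The paper does this in one global, explicit stroke: it writes down the relative trace-dual basis $\alpha^*=\betabar/(\alpha\betabar-\alphabar\beta)$ and $\beta^*=\alphabar/(\beta\alphabar-\betabar\alpha)$, deduces $\Did=\Aid^{-1}\alpha^*+\Bid^{-1}\beta^*$ from the decomposition $R=\Aid\alpha\oplus\Bid\beta$, and then notes the identity $(\alpha\betabar-\alphabar\beta)\Aid\Bid\Did=R$, which exhibits $\Did$ as a pure imaginary element times the real ideal $(\Aid\Bid)^{-1}$ times $R$. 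You instead localize at the primes $\pid$ of $\Rp$, use the invertibility of $\Aid$ and $\Bid$ only through local principality to see that $R_\pid$ is free of rank two over $\Rp_\pid$, extract a monogenic presentation $R_\pid=\Rp_\pid[\gamma]$ by the unimodularity-of-$1$/Nakayama argument, and apply Euler's formula to get $\Did_\pid=(\gamma-\bar{\gamma})^{-1}R_\pid$, generated by a pure imaginary element; the local--global bookkeeping you flag does go through, precisely because the denominators lie in $\Rp\setminus\pid\subset\Kp$ and are totally real, so localization commutes both with the relative trace dual (as $R$ is a finitely generated $\Rp$-module) and with taking pure imaginary elements. The paper's computation buys brevity and explicit global generators, with no localization at all; your argument buys a little more generality and structure: it uses only that $R$ is locally free (equivalently projective) of rank two as an $\Rp$-module, not the particular global splitting, and it runs exactly parallel to the monogenic different computation in the proof of Proposition~\ref{P:maximal}.
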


\begin{proof}
By hypothesis, $R$ satisfies the first two conditions in Definition~\ref{D:convenient},
so we need only check the third.

The trace dual of $R$ is the product of the trace dual of $\Rp$ with the
relative trace dual $\Did$ of $R$ over $\Rp$. The trace dual of $\Rp$
is obviously generated by totally real elements, so we just need to check that
$\Did$ is generated by pure imaginary elements.

Set
\[
\alpha^* = \frac{\betabar }{\alpha\betabar - \alphabar\beta}
\text{\quad and \quad}
\beta^*  = \frac{\alphabar}{\beta\alphabar - \betabar\alpha}.
\]
We note that
\[
\Tr_{K/\Kp} \alpha\alpha^* = \Tr_{K/\Kp} \beta\beta^*  = 1
\text{\quad and \quad}
\Tr_{K/\Kp} \alpha\beta^*  = \Tr_{K/\Kp} \beta\alpha^* = 0,
\]
so the relative trace dual $\Did$ of $R$ is 
\begin{align*}
\Did &= \Aid^{-1}\alpha^* + \Bid^{-1}\beta^*\\
\intertext{and we have}
(\alpha\betabar - \alphabar\beta)\Aid\Bid\Did &= \Aid\alpha+ \Bid\beta = R.
\end{align*}
Since $\Aid$ and $\Bid$ are fractional $\Rp$-ideals and
$\alpha\betabar - \alphabar\beta$ is pure imaginary,
we see that $\Did$ is generated as a fractional $R$-ideal by 
pure imaginary elements.
\end{proof}

We close this section by showing that there is a natural norm map from the 
invertible ideals of a convenient ring to the invertible ideals of its real
subring. We prove the statement in a more general context.

\begin{lemma}
Let $L/K$ be a quadratic extension of number fields, with nontrivial involution
$x\mapsto \xbar$. Let $S$ be an order of $L$ that is stable under the involution,
and let $R = S\cap K$. For every invertible fractional ideal $\Bid$ of $S$, there is a unique 
invertible fractional ideal $\Aid$ of $R$ such that $\Aid\otimes_{R}S = \Bid\Bidbar$.
\end{lemma}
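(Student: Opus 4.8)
The plan is to work one prime at a time. Since $\Bid$ is an invertible fractional $S$-ideal, for each maximal ideal $\pid$ of $R$ the localization $\Bid_\pid$ is an invertible fractional $S_\pid$-ideal, where $S_\pid = S\otimes_R R_\pid$; and since $S_\pid$ is a semilocal Dedekind-like extension of the semilocal ring $R_\pid$, it suffices to produce, locally, an invertible $R_\pid$-ideal $\Aid_\pid$ with $\Aid_\pid S_\pid = \Bid_\pid\Bidbar_\pid$ and to check that these localizations patch to a genuine fractional ideal of $R$ (i.e. agree with a single sub-$R$-module of $K$ at almost all primes, where $\Bid=S$ and one takes $\Aid=R$). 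So the real content is the local statement.

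First I would reduce to the case where $\Bid_\pid$ is \emph{principal} as an $S_\pid$-ideal. This holds: over a semilocal ring every invertible module is free of rank one, so $\Bid_\pid = \gamma S_\pid$ for some $\gamma\in L^\times$. Then $\Bid_\pid\Bidbar_\pid = \gamma\gammabar\, S_\pid$, and $\gamma\gammabar = N_{L/K}(\gamma)$ lies in $K^\times$. The obvious candidate is therefore $\Aid_\pid = N_{L/K}(\gamma)\, R_\pid$, which is visibly an invertible (indeed principal) fractional $R_\pid$-ideal, and it satisfies $\Aid_\pid\otimes_{R_\pid}S_\pid = \Aid_\pid S_\pid = \gamma\gammabar\,S_\pid = \Bid_\pid\Bidbar_\pid$. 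For uniqueness locally: if $\Aid_\pid S_\pid = \Aid_\pid' S_\pid$ with both $\Aid_\pid,\Aid_\pid'$ invertible $R_\pid$-ideals, then after multiplying by an inverse we may assume $\Aid_\pid' = R_\pid$ and must show $\Aid_\pid S_\pid = S_\pid$ forces $\Aid_\pid = R_\pid$; this follows because $R_\pid\hookrightarrow S_\pid$ is injective and $S_\pid$ is a finitely generated faithful $R_\pid$-module, so $\Aid_\pid = \Aid_\pid S_\pid\cap K = S_\pid\cap K = R_\pid$ (the last equality because $S\cap K=R$ is preserved under localization). Globally, uniqueness then follows by localizing at every prime.

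To globalize existence, I would observe that $\Aid := \{x\in K : x S\subseteq \Bid\Bidbar\}$ is a fractional $R$-ideal (it is nonzero, being a conductor-type submodule, and $R$-stable), and check that its localization at each $\pid$ is exactly the $\Aid_\pid$ constructed above — equivalently, that $\Aid\otimes_R S = \Bid\Bidbar$ after localizing everywhere, which then gives the equality of fractional $S$-ideals since a containment of fractional ideals that is an equality at every prime is an equality. For invertibility of $\Aid$: it is locally principal at every prime of $R$ by the construction, hence an invertible fractional $R$-ideal. Alternatively, and perhaps more cleanly, one can define $\Aid$ as the unique fractional $R$-ideal whose localizations are the $\Aid_\pid$, using that the $\Aid_\pid$ agree with $R_\pid$ for all but finitely many $\pid$.

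The main obstacle — really the only subtle point — is the compatibility/patching: making sure the locally defined data genuinely descends to a single invertible fractional $R$-ideal and that the tensor relation $\Aid\otimes_R S\cong \Bid\Bidbar$ holds globally rather than merely after localization. This is where one uses that $S$ is a \emph{finite} $R$-algebra (so $-\otimes_R S$ commutes with localization and one can check equalities primewise) and that $R$, though not Dedekind, is a one-dimensional Noetherian domain, so that a fractional ideal is determined by its localizations. Everything else — principality over semilocal rings, the identity $\gamma\gammabar = N_{L/K}(\gamma)\in K$, and the uniqueness argument — is routine.
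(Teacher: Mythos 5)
Your proof is correct, but it takes a genuinely different route from the paper's. The paper decomposes $\Inv S$ as $\bigoplus_\qid \Prin S_\qid$ over the primes $\qid$ of $S$ and defines the norm one $S$-prime at a time; this forces a case distinction according to whether $\qid$ is stable under the involution, and in the non-stable case requires an explicit (and somewhat delicate) construction of a generator of $\Bid_\qid$ that is simultaneously a unit at $\qidbar$, so that its norm has the right valuations at both $\qid$ and $\qidbar$. You instead localize at primes $\pid$ of the base order $R$: since $S_\pid$ is finite over the local ring $R_\pid$, it is semilocal, so the invertible ideal $\Bid_\pid$ is automatically principal, say $\gamma S_\pid$, and the norm is simply $N_{L/K}(\gamma)R_\pid$ --- conjugation commutes with this localization because the multiplicative set $R\setminus\pid$ is fixed by the involution, and no case analysis is needed even when conjugation permutes the primes of $S$ above $\pid$. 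Your local uniqueness via $S_\pid\cap K=R_\pid$ matches the spirit of the paper's global uniqueness argument (an ideal $\Eid$ of $R$ with $\Eid S=S$ must be $R$), and your patching step is sound: the local ideals are trivial at almost all $\pid$, so they glue to an invertible fractional $R$-ideal (equivalently, your colon-ideal definition $\{x\in K: xS\subseteq\Bid\Bidbar\}$ works, since colon and intersection with $K$ commute with localization for finitely generated modules), and the identity $\Aid\otimes_R S=\Bid\Bidbar$ can be checked primewise because $S$ is a finite $R$-module. What your approach buys is the elimination of the case split and of the auxiliary element $b'$, at the cost of invoking the standard facts that finitely generated rank-one projectives over semilocal rings are free and that a locally principal fractional ideal over a one-dimensional Noetherian domain is invertible and determined by its localizations; the paper's argument is longer but uses only the explicit ideal-theoretic isomorphism it cites and elementary manipulations with elements.
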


We call the ideal $\Aid$ the \emph{norm} of $\Bid$.  

\begin{proof}
Let $\Inv S$ denote the group of invertible fractional ideals of $S$, and for 
each prime $\qid$ of $S$ let $\Prin S_\qid$ denote the group of principal fractional
ideals of the localization $S_\qid$. Then the map that sends an ideal to its localizations
gives an isomorphism
\begin{equation}
\label{EQ:invertible}
\Inv S \to \bigoplus_{\substack{\text{primes}\\ \qid\, \text{of}\, S}} \Prin S_\qid
\end{equation}
(see \cite[Proposition~12.6, p.~75]{Neukirch1999}). The inverse isomorphism is given by
sending a collection $(\Bid_\qid)_\qid$ of principal fractional ideals (viewed as subsets of~$L$)
to their intersection $\bigcap_\qid \Bid_\qid$. For each prime $\qid$, we will define the norm
map on the image of $\Prin S_\qid$ in $\Inv S$; this will suffice to define the norm
on all of~$\Inv S$.

Suppose $\qid$ is a prime of $S$, let $\pid$ be the prime of $R$ lying under $\qid$, 
and suppose $\Bid_\qid$ is a principal fractional ideal of $S_\qid$. Let $\Bid$ be the
invertible ideal of $S$ whose image in the right-hand side of~\eqref{EQ:invertible}
is trivial in every component except the $\qid$-th, where it is $\Bid_\qid$.
Let $b\in L^*$ be an element that generates $\Bid_\qid$ as an $S_\qid$-module.

Suppose $\qid$ is stable under complex conjugation; then $\bbar$ generates $\Bidbar_\qid$.
Let $a = b\bbar\in K^*$. We define $N(\Bid)$
to be the invertible ideal $\Aid$ of $R$ whose component at every prime other than $\pid$
is trivial, and whose $\pid$-th component is the principal fractional ideal $bR_\pid$.
Clearly $\Aid \otimes_{R}S = \Bid\Bidbar$.

Now suppose $\qid$ is \emph{not} stable under complex conjugation.
We claim that there is an $b'\in L$ that generates $\Bid_\qid$ and that 
has the additional property that $b'\in S_{\qidbar}^*$. It will suffice to prove this
in the case where $b$ lies in~$S$.

If $b\in S\setminus\qid$ then we may take $b' = 1$, so let us assume that $b\in\qid$.
If $b\in S\setminus\qidbar$ then we may take $b' = b$, so let us assume that $b\in\qidbar$ as well.
By the Chinese Remainder Theorem we may pick an element $z\in S$ such that $z\in\qidbar$ and
$z\equiv 1\bmod \qid$. The radical of $b S_{\qidbar}$ is $\qidbar S_{\qidbar}$, so some power of $z$ 
lies in $b S_{\qidbar}$; say $z^n \in b S_{\qidbar}$ for some $n\ge 1$. 

Let $v = b + z^{n+1}$, and let $b' = b/v$. Note that $v/b = 1 + z (z^n/b) \in 1 + \qidbar S_{\qidbar}$
so that $v/b$, and hence $b'$, is an element of $S_{\qidbar}^*$. Note also that $v\equiv 1\bmod \qid$, 
so $b$ and $b'$ generate the same $S_\qid$ ideal. Thus, this $b'$ meets our requirements.

Replace $b$ with $b'$, and once again let $a = b\bbar\in K^*$. We define $N(\Bid)$
to be the invertible ideal $\Aid$ of $R$ whose component at every prime other than $\pid$
is trivial, and whose $\pid$-th component is the principal fractional ideal $aR_\pid$.
We will show that $\Aid \otimes_{R}S = \Bid\Bidbar$.

Clearly $\Aid \otimes_{R}S$ is trivial at every prime of $S$ that does not lie over~$\pid$.
On the other hand, $\Aid \otimes_{R}S_\qid$ is equal to $b\bbar S_\qid$. Since $b$ is a unit in $S_{\qidbar}$, we
see that $\bbar$ is a unit in $S_\qid$. Thus, $\Aid \otimes_{R} S_\qid = b S_\qid$,
so that $\Aid \otimes_{R} S$ has the same localization
at $\pid$ as does the product $\Bid\Bidbar$. Likewise, $\Aid \otimes_{R}S$ and $\Bid\Bidbar$
have the same localization at~$\qidbar$. Since the two ideals also have the same (trivial) localizations
at all primes other than $\qid$ and $\qidbar$, they must be equal. 

We see that we can define the ideal norm on a set of ideals that generate $\Inv S$, so we can
define the norm on all of $\Inv S$. We also see that the norm is unique:
If there were two distinct invertible ideals of $R$ that lifted to $\Bid\Bidbar$,
then their quotient would be a nontrivial ideal $\Eid$ such that $\Eid\otimes_{R}S = S$.
We would then have $\Eid\subseteq R$, and also that $\Eid$ contains a unit of~$S$.  This unit
would then also be a unit of $R$, so $\Eid = R$, contradicting the nontriviality of $\Eid$.
\end{proof}

\begin{remark}
Let $R$ be a convenient order. Recall that the \emph{Picard group} $\Pic R$ of $R$ is the group of
isomorphism classes of invertible $R$-ideals. The \emph{narrow Picard group} 
$\Picplus \Rp$ of the real order $\Rp$ is the group of strict isomorphism classes
of invertible $\Rp$-ideals, where two invertible $\Rp$-ideals $\Aid$ and $\Bid$ 
are said to be \emph{strictly isomorphic} if there is a totally positive element
$x$ of $\Kp$ such that $x\Aid = \Bid$. The norm map on invertible ideals gives 
us a homomorphism $N_{\Pic}$ from $\Pic R$ to $\Picplus \Rp$, which we continue
to call the norm.
\end{remark}


\section{Isogeny classes and convenient orders}
\label{S:isogenyclasses}

In this section we show that some rings associated to a simple ordinary isogeny 
class of abelian varieties over a finite field are convenient.

Suppose that $\calC$ is an isogeny class of simple $n$-dimensional ordinary 
abelian varieties over a finite field $k$ with $q$ elements, and let $f$ be its Weil polynomial.
Honda--Tate theory shows that $f$ has degree $2n$ and is irreducible, and that
the number field $K$ defined by $f$ is a CM~field. Let $\Kp$ be the maximal
real subfield of $K$, and let $\pi$ be a root of $f$ in $K$.


\begin{proposition}
\label{P:examples}
Let $B$ be a Gorenstein order in $\Kp$ that contains $\pi+\pibar$.
Then the ring $R = B[\pi]$ is convenient.
\end{proposition}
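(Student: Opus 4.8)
The plan is to verify the three conditions of Definition~\ref{D:convenient} for $R = B[\pi]$, reducing the third condition to Proposition~\ref{P:convenient}. Condition~\eqref{conv1} is immediate: since $B \subseteq \Kp$ is fixed by complex conjugation and $\bar\pi$ is a root of the same Weil polynomial $f$ as $\pi$ (and equals $q/\pi$), the ring $B[\pi]$ contains $\bar\pi = (\pi+\bar\pi) - \pi$, using that $\pi+\bar\pi \in B$ by hypothesis; hence $R$ is stable under complex conjugation. For condition~\eqref{conv2}, I would identify $\Rp = R \cap \Kp$. The ring $R = B[\pi]$ is free of rank $2$ over $B$ with basis $\{1,\pi\}$ (because $f$ has degree $2n$ and $B$ has rank $n$ over $\BZ$, while $\pi$ satisfies the quadratic $\pi^2 - (\pi+\bar\pi)\pi + q = 0$ over $\Kp$, with coefficients in $B$). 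An element $a + b\pi$ with $a,b \in B$ is fixed by complex conjugation iff $a + b\pi = \bar a + \bar b \bar\pi = a + b(\pi+\bar\pi) - b\pi$, i.e.\ iff $2b\pi = b(\pi+\bar\pi)$, i.e.\ iff $b = 0$ (since $\pi \notin \Kp$). So $\Rp = B$, which is Gorenstein by hypothesis.

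It remains to check condition~\eqref{conv3}, and here I would invoke Proposition~\ref{P:convenient} rather than computing the trace dual directly. We have the decomposition $R = B[\pi] = B \cdot 1 \oplus B \cdot \pi$ as a $B$-module. Setting $\alpha = 1$ and $\beta = \pi$, and $\Aid = \Bid = B$ (the unit ideal, which is invertible as a fractional $\Rp = B$-ideal), this is exactly a presentation $R = \Aid\alpha \oplus \Bid\beta$ of the form required by Proposition~\ref{P:convenient}. Since $R$ is stable under complex conjugation and $\Rp = B$ is Gorenstein, Proposition~\ref{P:convenient} applies directly and yields that $R$ is convenient.

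The only real content is the module-theoretic bookkeeping in the middle paragraph: confirming that $\{1,\pi\}$ is a $B$-basis of $B[\pi]$ and that consequently $R \cap \Kp = B$. The rank count ($\mathrm{rk}_\BZ B[\pi] \le 2n$ from the relation $\pi^2 = (\pi+\bar\pi)\pi - q$ with coefficients in $B$, and $\mathrm{rk}_\BZ B[\pi] = 2n$ since $B[\pi] \otimes \BQ = K$) forces freeness of rank $2$ over $B$; after that the fixed-point computation is a one-liner. I would expect this to be the main — though still quite mild — obstacle, since everything else is either a hypothesis or a citation of Proposition~\ref{P:convenient}. An alternative to citing Proposition~\ref{P:convenient} would be to observe directly that $\pi - \bar\pi$ is a pure imaginary element and that the relative different of $R$ over $\Rp$ is generated by $f_\pi'(\pi) = 2\pi - (\pi+\bar\pi) = \pi - \bar\pi$ (as in the proof of Proposition~\ref{P:maximal}), but routing through Proposition~\ref{P:convenient} is cleaner and avoids re-deriving the trace-dual computation.
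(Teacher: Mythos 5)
Your proof is correct and follows essentially the same route as the paper: observe that $1,\pi$ form a $B$-basis of $R$ (via the quadratic relation $\pi^2-(\pi+\pibar)\pi+q=0$), deduce stability under conjugation and $\Rp=B$, and then apply Proposition~\ref{P:convenient} with the decomposition $R = B\cdot 1\oplus B\cdot\pi$. The extra details you supply (the fixed-point computation for $\Rp=B$ and the alternative via the relative different) are fine but not a different argument.
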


\begin{proof}
The minimal polynomial of $\pi$ over $\Kp$ is $x^2 - (\pi+\pibar)x + q$, so
$1$ and $\pi$ form a basis for $R$ as a $B$-module. It follows easily that 
$R$ is stable under complex conjugation and that $\Rp = B$. The result follows from 
Proposition~\ref{P:convenient}.
\end{proof}

\begin{corollary}
\label{C:minimalR}
The order $R = \BZ[\pi,\pibar]$ of $K$ is convenient.
\end{corollary}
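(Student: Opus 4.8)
The plan is to derive this immediately from Proposition~\ref{P:examples} by writing $\BZ[\pi,\pibar]$ in the form $B[\pi]$ for a suitably chosen Gorenstein order $B$ of $\Kp$. The natural choice is $B = \BZ[\pi+\pibar]$, the monogenic order of $\Kp$ generated by the totally real element $\pi+\pibar = \Tr_{K/\Kp}(\pi)$.

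First I would check that this $B$ satisfies the hypotheses of Proposition~\ref{P:examples}. The element $\pi+\pibar$ is an algebraic integer fixed by complex conjugation, hence lies in the ring of integers of $\Kp$, so $\BZ[\pi+\pibar]$ is a subring of finite index in that ring of integers, i.e.\ an order of~$\Kp$; it obviously contains $\pi+\pibar$. Being monogenic, it is a complete intersection over $\BZ$, and therefore Gorenstein by the third of the facts recalled at the start of Section~\ref{S:convenient}. Thus Proposition~\ref{P:examples} applies and shows that $R = B[\pi]$ is convenient, with real subring $\Rp = B$.

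It then remains only to identify $B[\pi]$ with $\BZ[\pi,\pibar]$. Since $\pi+\pibar\in\BZ[\pi,\pibar]$ we have $B\subseteq\BZ[\pi,\pibar]$ and hence $B[\pi]\subseteq\BZ[\pi,\pibar]$; conversely $\pibar = (\pi+\pibar)-\pi$ lies in $B[\pi]$, so $\BZ[\pi,\pibar]\subseteq B[\pi]$. Therefore $\BZ[\pi,\pibar] = B[\pi]$ is convenient. There is essentially no obstacle here: the only point requiring any care is that Proposition~\ref{P:examples} is phrased with $B$ assumed Gorenstein, which is precisely why one invokes the complete-intersection fact for the monogenic order $\BZ[\pi+\pibar]$ rather than arguing about convenience of $\BZ[\pi,\pibar]$ by hand.
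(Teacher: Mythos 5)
Your proposal is correct and follows essentially the same route as the paper: both invoke Proposition~\ref{P:examples} with $B = \BZ[\pi+\pibar]$, Gorenstein because it is monogenic, the only difference being that you identify $\BZ[\pi,\pibar] = B[\pi]$ by a direct two-way containment where the paper exhibits an explicit $\BZ$-basis of $R$ to see that $\Rp = \BZ[\pi+\pibar]$. The one tacit point in your write-up is that $\BZ[\pi+\pibar]$ has full rank, i.e.\ $\BQ(\pi+\pibar) = \Kp$; this is immediate since $\pi$ satisfies $x^2-(\pi+\pibar)x+q$ over $\BQ(\pi+\pibar)$ and $[K:\BQ]=2n$, so it does not affect correctness.
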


\begin{proof}
It is not hard to show that one basis for $R$ as a $\BZ$-module is
\[
\{1, \pi, \pibar, \pi^2, \pibar^2, \ldots, \pi^{n-1}, \pibar^{n-1}, \pi^n \},
\]
and from this one sees that $\Rp = \BZ[\pi+\pibar]$.
The ring $\Rp$ is Gorenstein because it is integral over $\BZ$ and monogenic.
The corollary follows from Proposition~\ref{P:examples}.
\end{proof}

\begin{example}
\label{EX:inconvenient}
Here we give an example that shows that an order can satisfy
conditions~\eqref{conv1} and~\eqref{conv2}
of Definition~\ref{D:convenient}, without also satisfying condition~\eqref{conv3}.
Let $p = 19$ and let $f$ be the ordinary irreducible Weil polynomial
$x^4 - 4x^3 + 10x^2 - 4px + p^2$, corresponding to an isogeny class 
of abelian surfaces over~$\Fp$. One checks that
$\pi+\pibar = 2 + 4\sqrt{2}$ for a choice of $\sqrt{2}$ in $K$.

Let $R$ be the $\BZ$-module generated by 
\[
1,\quad 
2\sqrt{2},\quad 
\frac{\pi-\pibar}{2},\text{\quad and \quad}
\frac{(\pi-\pibar)\sqrt{2}}{2}.
\]
Using the fact that $(\pi-\pibar)^2/4 = -10 + 4\sqrt{2}$, we see that $R$ is 
closed under multiplication and is therefore a ring.
We will show that $R$ satisfies the first two conditions for being convenient,
but not the third, showing that the third is not a consequence of the first two.

It is clear that $R$ is stable under complex conjugation, and that $\Rp = \BZ[2\sqrt{2}]$.
The ring $\Rp$ is Gorenstein because it is monogenic.
Thus $R$ satisfies conditions~\eqref{conv1} and~\eqref{conv2}.

We compute that $R^\dagger$ is the $\BZ$-module generated by
\[
\frac{1}{4},\quad
\frac{\sqrt{2}}{16},\quad
\frac{1}{2(\pi-\pibar)},\text{\quad and \quad}
\frac{\sqrt{2}}{4(\pi-\pibar)}.
\]
The pure imaginary elements of $R^\dagger$ are the elements of the $\BZ$-module
generated by 
\[\frac{1}{2(\pi-\pibar)}\text{\quad and \quad}
\frac{\sqrt{2}}{4(\pi-\pibar)}.
\]
The $R$-module generated by these elements is spanned as a $\BZ$-module by
\[
\frac{1}{4},\quad
\frac{\sqrt{2}}{8},\quad
\frac{1}{2(\pi-\pibar)},\text{\quad and \quad}
\frac{\sqrt{2}}{4(\pi-\pibar)},
\]
and this has index $2$ in $R^\dagger$. Thus, $R$ does not satisfy 
condition~\eqref{conv3} of Definition~\ref{D:convenient}, even though it satisfies 
conditions~\eqref{conv1} and~\eqref{conv2}.

(Note however that $R$ is Gorenstein:
If we let $\Lambda$ be the $\BZ$-module generated by
$32-40\sqrt{2}$, $136\sqrt{2}$, $8(\pi-\pibar)$, and $4\sqrt{2}(\pi-\pibar)$,
then $R^\dagger \Lambda = R$, so $R^\dagger$ is an invertible fractional $R$-ideal.)
\end{example}


\section{Principally polarized varieties and minus class numbers of orders}
\label{S:PPAVs}

In this section we will prove Theorem~\ref{T:classgroup} and 
Corollary~\ref{C:classgroup}, and we give some conditions under which the norm
map from $\Pic R$ to $\Picplus \Rp$ is surjective. Throughout the section we 
continue to use the notation set at the beginning of
Section~\ref{S:isogenyclasses}: $k$ is a finite field with $q$ elements, $\calC$
is an isogeny class of simple $n$-dimensional ordinary abelian varieties 
over~$k$, $f$ is the Weil polynomial for $\calC$ (and is irreducible and of 
degree~$2n$), $K$ is the CM~field defined by~$f$, $\Kp$ is its maximal real 
subfield, and $\pi$ is a root of $f$ in~$K$.

Before we begin the proof of Theorem~\ref{T:classgroup}, let us make one comment
on the restriction to strata corresponding to convenient orders. If $A$ is an
abelian variety in $\calC$ and if $A$ has a principal polarization, then 
$\End A$ is stable under complex conjugation because the Rosati involution on 
$(\End A)\otimes\BQ = K$ associated to a principal polarization takes $\End A$ to itself,
and the only positive involution on $K$ is complex conjugation. Thus, every 
stratum of $\calC$ that contains a principally polarized variety must correspond
to an endomorphism ring $R$ which satisfies condition~\eqref{conv1} of 
Definition~\ref{D:convenient}. In general, however, $\End A$ need not be
convenient.

\begin{proof}[Proof of Theorem~\textup{\ref{T:classgroup}}]
To understand the category of abelian varieties in the ordinary isogeny class~$\calC$,
we turn to the theory of Deligne modules and their polarizations as set forth in~\cite{Howe1995},
based on Deligne's equivalence of categories~\cite{Deligne1969} between ordinary abelian varieties over a
finite field and a certain category of modules.

Deligne's equivalence of categories involves picking an embedding of the Witt
vectors over $\kb$ into the complex numbers~$\BC$, and this embedding determines a $p$-adic valuation
$v$ on the algebraic numbers in $\BC$.  We let
\[
\Phi = \{ \varphi\colon K\to\BC \mid v(\varphi(\pi)) > 0 \}
\]
so that $\Phi$ is a \emph{CM-type}, that is, a choice of half of all the 
embeddings of $K$ into $\BC$, one from each complex-conjugate pair.

Following~\cite{Howe1995}, we see that the abelian varieties $A$ in~$\calS$ 
correspond via Deligne's equivalence to the classes of fractional ideals $\Aid$ of $R$ with 
$\End\Aid = R$, and since $R$ is Gorenstein these are precisely the classes 
of invertible fractional $R$-ideals.  According to~\cite{Howe1995},
if $A$ corresponds to (the class of) an ideal $\Aid$, then the dual $\Adual$
of $A$ corresponds to the complex conjugate of the trace dual of $\Aid$.
Let $\did$ be the different of $R$; since $R$ is stable under complex 
conjugation, so is $\did$.
The trace dual of an invertible $R$-ideal $\Aid$ is $\did^{-1}\Aid^{-1}$,
and we see that $\Adual$ corresponds to the class of $\did^{-1}\Aidbar^{-1}$ in $\Pic R$.

An isogeny from one Deligne module $\Aid$ to another $\Bid$ is an element $x\in K$ such that
$x\Aid\subseteq\Bid$.  The degree of this isogeny is the index of $x\Aid$ in $\Bid$.
A polarization of $A$ is an isogeny $A\to\Adual$ that satisfies certain symmetry
and positivity conditions. In the category of Deligne modules, a polarization is
an isogeny $x$ from $\Aid$ to  $\did^{-1}\Aidbar^{-1}$ such that $x$ is pure imaginary
and such that $\varphi(x)$ is positive imaginary (that is, a positive real times the element $i$ of~$\BC$) for every 
$\varphi\colon K\to \BC$ in the CM-type $\Phi$. 

Fix an arbitrary pure imaginary $\iota\in K$
such that $\varphi(\iota)$ is positive imaginary for every 
$\varphi\colon K\to \BC$ in the CM-type $\Phi$. Then a polarization of a Deligne module $\Aid$
is an isogeny $\iota x$ from $\Aid$ to $\did^{-1}\Aidbar^{-1}$ such that $x$ is a 
totally positive element of~$\Kp$.

It is now easy to characterize the Deligne modules $\Aid$, with
$\End \Aid = R$, that have principal polarizations. We see that
such an $\Aid$ has a principal polarization if and only if there is a 
totally positive $x\in\Kp$ such that $\iota x \Aid = \did^{-1}\Aidbar^{-1}$.
This condition is equivalent to $x\Aid\Aidbar = (\iota\did)^{-1}$.

Since $R$ is convenient, the different $\did$ can be generated by pure
imaginary elements, so the ideal $\iota\did$ can be generated by real elements.
Let $\did' = (\iota\did)\cap\Kp$. Then $\did'$ is a fractional $\Rp$-ideal
with $\did' R = \iota\did$.
In fact, we have $\End \did' = (\End \iota\did)\cap \Kp = R\cap \Kp = \Rp$,
and since $\Rp$ is Gorenstein, this means that $\did'$ is an invertible 
fractional $\Rp$-ideal.

Let $\Did$ be the norm of $\Aid$. Then the equality $x\Aid\Aidbar = (\iota\did)^{-1}$
of invertible fractional $R$-ideals is equivalent to the equality $x\Did = (\did')^{-1}$ of
invertible fractional $\Rp$-ideals.
In other words,
we see that the abelian variety corresponding to the class of $\Aid$
in $\Pic R$ has a principal polarization if and only if this class maps, via
the norm map $\Pic R \to \Picplus \Rp$, to the class of $(\did')^{-1}$.
Since this norm map is surjective by assumption, the
number of principally polarizable classes $[\Aid]$ 
is simply the quotient $(\#\Pic R)/(\#\Picplus \Rp)$.

Finally, we count the number of distinct principal polarizations (up to isomorphism) on a 
Deligne module, given that it has one.  Suppose $\lambda$ and $\mu$ are two principal polarizations
on a Deligne module $\Aid$ with $\End \Aid = R$. Then $\mu^{-1}\lambda$ is an automorphism of $\Aid$,
and it is a totally positive element of $\Rp$.  Conversely, if $u$ is a totally positive unit of $\Rp$,
then $u\lambda$ is a principal polarization of $\Aid$.

Two principal polarizations $\lambda$ and $\mu$ are isomorphic if and only if there is an isomorphism $\alpha\colon\Aid\to\Aid$
such that $\mu = \hat{\alpha} \lambda\alpha$, where $\hat{\alpha}$ is the dual isogeny of $\alpha$. The Rosati involution
on $\End A$ --- which is complex conjugation --- is given by $x\mapsto \lambda^{-1} \hat{x} \lambda$,
so we find that $\lambda$ and $\mu$ are isomorphic if and only if $\mu = \lambda \alphabar\alpha$.
Thus, the isomorphism classes of principal polarizations
on $\Aid$ correspond to elements of $\Up_{>0}$ modulo $N(U)$.  The theorem follows.
\end{proof}

\begin{proof}[Proof of Corollary~\textup{\ref{C:classgroup}}]
By Theorem~\ref{T:classgroup}, the total number of principally polarized varieties
$(A,\lambda)$ in the stratum $\calS$, counted up to isomorphism, is equal to 
\[
[\Up_{>0} : N(U)] \, \frac{\#\Pic R^{\phantom{+}}}{\#\Pic^+ \Rp},
\]
where $\Up_{>0}$ is the group of totally positive units of~$\Rp$.
Since $U\supseteq \Up$, we can rewrite this expression as
\begin{align*}
[\Up_{>0} : N(U)] \, \frac{\#\Pic R^{\phantom{+}}}{\#\Pic^+ \Rp} 
   &= \frac{[\Up_{>0} : (\Up)^2]}{[N(U) : (\Up)^2]} \, \frac{\#\Pic R^{\phantom{+}}}{\#\Pic^+ \Rp} \\
   &= \frac{1}{[N(U) : (\Up)^2]} \, \frac{\#\Pic R^{\phantom{+}}}{\#\Pic \Rp}.
\end{align*}
We are left to prove the statement about the unit index.
Hasse~\cite[Satz~14, p.~54]{Hasse1952} shows that the index of $(\Up)^2$ in $N(U)$ is
either $1$ or~$2$.  We will show that if the index is~$2$, then $K/\Kp$ is 
unramified at all odd primes.

Suppose $v$ is an element of $N(U)$ not in $(\Up)^2$, say with $v = N(u)$ for
some $u\in U\setminus\Up$.  Then $\ubar/u$ is an algebraic integer, and 
for every embedding $\varphi$ of $K$ into $\BC$ we find that $\varphi(\ubar/u)$
lies on the unit circle; therefore $\ubar/u = \zeta$ for some root of unity $\zeta$.
Let $n$ be the smallest positive integer with $\zeta^n = 1$ and write 
$n = 2^e m$ with $m$ odd.  Let $a = (1-m)/2$. If we let $z = \zeta^a u$, then
$N(z) = v$, and $\zbar/z = \zeta^m$ is a primitive $2^e$-th root of unity.
Replacing $u$ with $z$, we find we may assume that $n = 2^e$ is a power of~$2$.
Furthermore, if $n=1$ then $\ubar=u$ and $N(u)\in(\Up)^2$, contrary to assumption,
so $n > 1$ and $e > 0$.

We see that $v = u\ubar = \zeta u^2$, so $u^{2^e} = - v^{2^{e-1}}$.  If $e = 1$
then $u^2 = -v$, so $K = \Kp(\sqrt{-v})$.   If $e > 1$ then $(u^{2^{e-1}} / v^{2^{e-2}})^2 = -1$,
so $K = \Kp(\sqrt{-1})$.  In both cases $K$ is obtained from $\Kp$ by adjoining the
square root of a unit, so $K/\Kp$ is unramified at all odd primes.
\end{proof}

\begin{remark}
\label{R:cokernel}
Suppose $\calS$ is a stratum corresponding to a convenient order~$R$,
and suppose the norm map $N_{\Pic}$ is \emph{not} surjective; say that the cokernel has order $n>1$. If the class of
$(\did')^{-1}$ in $\Picplus \Rp$ is not in the image of the norm map,
then there will be \emph{no} principally polarized varieties $A\in\calC$
with $\End A = R$. On the other hand, if the class of
$(\did')^{-1}$ is in the image of the norm, there will be 
$n(\#\Pic R)/(\#\Picplus \Rp)$ principally polarizable varieties $A\in\calC$
with $\End A = R$, and each such variety will have  ${[\Up_{>0} : N(U)]}$
isomorphism classes of principal polarizations.
Likewise, the total number of principally polarized varieties
$(A,\lambda)$ in $\calS$, counted up to isomorphism, will be  
\[\frac{n}{[N(U) : (\Up)^2]} \, \frac{\#\Pic R^{\phantom{+}}}{\#\Pic \Rp}\]
or $0$, depending on whether or not the class of 
$(\did')^{-1}$ is in the image of the norm.
\end{remark}

\begin{remark}
\label{R:lowerstar}
Let $R$ be the endomorphism ring for a stratum $\calS$ of an isogeny class.
For an alternative viewpoint on Corollary~\ref{C:classgroup}, we can consider
the group $\Pic_* R$ defined by Lenstra, Pila, and Pomerance~\cite[\S6]{LenstraPilaEtAl2002}
as the set of equivalence classes of pairs $(\Bid,\beta)$, where $\Bid$ is an invertible $R$-ideal
and $\beta$ is a totally positive element of $\Kp$ such that $N(\Bid)=\beta R$, and where
two such pairs $(\Bid,\beta)$ and $(\Cid,\gamma)$ are taken to be equivalent if there is
an element $\alpha\in K^*$ such that $\alpha \Bid = \Cid$ and $\alpha\alphabar\beta = \gamma$.
This group is sometimes called the \emph{Shimura class group} of~$R$.

The group $\Pic_* R$ acts on the set $X$ of principally polarized abelian
varieties $(A,\lambda)$ in the stratum~$\calS$, as follows: If $(A,\lambda)$
corresponds to a Deligne module $\Aid$ together with a totally positive $x\in\Kp$
such that $x\Aid\Aidbar = (\iota\did)^{-1}$ (with notation as in the proof of
Theorem~\ref{T:classgroup}), then for every $(\Bid,\beta)$ in $\Pic_* R$
we define $(\Bid,\beta)\cdot (A,\lambda)$
to be the principally polarized variety corresponding to the Deligne module 
$\Bid\Aid$ and the element $x/\beta\in\Kp$.
It is easy to see that if the set $X$ is nonempty, then it is a principal homogeneous space
for the group $\Pic_* R$.

Thus, Corollary~\ref{C:classgroup} says that
when $\Pic R \to \Picplus \Rp$ is surjective, the group $\Pic_* R$ has order 
\[\frac{1}{[N(U) : (\Up)^2]} \, \frac{\#\Pic R^{\phantom{+}}}{\#\Pic \Rp},\]
and there are this many principally polarized varieties in~$\calS$.
More generally, we find (as in Remark~\ref{R:cokernel}) that
\[\#\Pic_* R = \frac{n}{[N(U) : (\Up)^2]} \, \frac{\#\Pic R^{\phantom{+}}}{\#\Pic \Rp},\]
where $n$ is the order of the cokernel of the norm map $\Pic R \to \Picplus \Rp$.
\end{remark}

Next we give some conditions under which 
the norm map $\Pic R \to \Picplus \Rp$ is guaranteed to be surjective.
Suppose $R$ is a convenient order in a CM~field~$K$.
Let $\fid$ be the conductor of $\Rp$ and let $L/\Kp$ be the ray class
field for the modulus of $\Kp$ determined by $\fid$ together with all of
the infinite primes.

\begin{proposition}
\label{P:norm}
If $K/\Kp$ is not isomorphic to a subextension of $L/\Kp$,
then the norm map  $\Pic R \to \Picplus \Rp$ is surjective.
\end{proposition}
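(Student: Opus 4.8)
The plan is to move the whole question to ray class groups and read it off from a single commutative square. Write $\calO,\calOp$ for the maximal orders of $K,\Kp$; let $\fid$ be the conductor of $\Rp$ in $\calOp$, so that $L=\Kp(\fid\infty)$ is the ray class field with $\operatorname{Gal}(L/\Kp)\cong\operatorname{Cl}_{\fid\infty}(\Kp)$; let $\fid_R$ be the conductor of $R$ in $\calO$; and fix a modulus $\mideal$ of $K$ divisible by both $\fid\calO$ and $\fid_R$. First I would record two surjections coming from the standard sequence relating an order to its maximal order: $\operatorname{Cl}_{\fid\infty}(\Kp)\twoheadrightarrow\Picplus\Rp$, sending the ray class of a $\calOp$-ideal $\Aid$ prime to $\fid$ to the strict class of the invertible $\Rp$-ideal $\Aid\cap\Rp$, and --- since $K$ is totally imaginary, so it has no narrow class group distinct from $\Pic$ --- $\operatorname{Cl}_{\mideal}(K)\twoheadrightarrow\Pic R$, sending the ray class of an $\calO$-ideal $\qid$ prime to $\mideal$ to $[\qid\cap R]$.

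Next I would verify that the relative norm descends to a homomorphism $N_{K/\Kp}\colon\operatorname{Cl}_{\mideal}(K)\to\operatorname{Cl}_{\fid\infty}(\Kp)$ --- immediate from $\fid\calO\mid\mideal$ together with the fact that a relative norm from $K$ is totally positive --- and that the square
\[
\begin{array}{ccc}
\operatorname{Cl}_{\mideal}(K) & \xrightarrow{\ N_{K/\Kp}\ } & \operatorname{Cl}_{\fid\infty}(\Kp)\\
\downarrow & & \downarrow\\
\Pic R & \xrightarrow{\ N_{\Pic}\ } & \Picplus\Rp
\end{array}
\]
commutes, where $N_{\Pic}$ is the norm map furnished by the Lemma. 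It is enough to check commutativity on the class of a prime $\qid$ of $\calO$ prime to $\mideal$: then $(\qid\cap R)\calO=\qid$, so $N(\qid\cap R)\calO=(\qid\cap R)\overline{(\qid\cap R)}\,\calO=\qid\bar{\qid}=N_{K/\Kp}(\qid)\calO$; as $N(\qid\cap R)$ and $N_{K/\Kp}(\qid)\cap\Rp$ are invertible $\Rp$-ideals prime to $\fid$ with the same extension to $\calO$, and that extension is injective on such ideals, the two coincide. (This uses that $R$, being convenient, is Gorenstein, so that ideals prime to the conductor are invertible, and it uses the identity $N(\Bid)\calO=\Bid\Bidbar\,\calO$ for $N_{\Pic}$.)

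The crux is the surjectivity of the top arrow. By class field theory the conductor of $LK/K$ divides $\fid\calO$ (the infinite places of $\Kp$ become complex over $K$), hence divides $\mideal$, and compatibility of the Artin map with the norm identifies the image of $N_{K/\Kp}\colon\operatorname{Cl}_{\mideal}(K)\to\operatorname{Cl}_{\fid\infty}(\Kp)$ with the subgroup corresponding to $\operatorname{Gal}(L/K\cap L)$; thus its cokernel is $\operatorname{Gal}((K\cap L)/\Kp)$. Since $[K:\Kp]=2$, the hypothesis that $K/\Kp$ is not isomorphic to a subextension of $L/\Kp$ says exactly that $K\cap L=\Kp$, so the cokernel vanishes and $N_{K/\Kp}$ is onto. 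A diagram chase then finishes the proof: both vertical maps are surjective, so the composite $\operatorname{Cl}_{\mideal}(K)\to\Picplus\Rp$ along the bottom is surjective, which forces $N_{\Pic}$ to be surjective.

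The step I expect to need the most care is reconciling the two conductors. The conductor $\fid$ of $\Rp$ in $\calOp$, which controls $L$ and hence the hypothesis, bears no general relation to the conductor $\fid_R$ of $R$ in $\calO$ --- a prime $\pid$ of $\Kp$ with $\pid\nmid\fid$ can lie below a prime of $\calO$ at which $R$ is non-maximal --- so one is forced to take the $K$-side modulus $\mideal$ divisible by both, and then must be sure that enlarging $\mideal$ in this way does not shrink the image of $N_{K/\Kp}$ in $\operatorname{Cl}_{\fid\infty}(\Kp)$ (it does not, since that image depends only on $\mideal$ being divisible by the conductor of $L$ over $K$). The remaining ingredients --- the two order-versus-maximal-order surjections, the identity linking $N_{\Pic}$ to the ideal-theoretic relative norm, and the descent of $N_{K/\Kp}$ to ray class groups --- are all routine.
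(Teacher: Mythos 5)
Your argument is correct, but it takes a genuinely different route from the paper's, so let me compare. The paper works with the idelic descriptions $\Pic R \cong \Khat^*/(\Rhat^* K^*)$ and $\Picplus \Rp \cong (\hat{\Kp})^*/((\hat{\Rp})^* (\Kp)^*_{>0})$ and splits the cokernel of $N_{\Pic}$ into two pieces: surjectivity of $\Pic \calO \to \Picplus \calOp$, deduced from ramification of $K/\Kp$ at a finite prime together with Proposition~10.1 of the author's 1995 paper, and triviality of the quotient $(\hat{\calOp})^* (\Kp)^*_{>0} N(\Khat^*)/\bigl((\hat{\Rp})^* (\Kp)^*_{>0} N(\Khat^*)\bigr)$, proved by an explicit approximation argument plus the Chebotar\"ev density theorem applied to $L\cdot K/\Kp$. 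You instead stay entirely within classical ray class groups: the standard surjections $\operatorname{Cl}_{\fid\infty}(\Kp)\twoheadrightarrow\Picplus\Rp$ and $\operatorname{Cl}_{\mideal}(K)\twoheadrightarrow\Pic R$, the compatibility of the Lemma's ideal norm with $N_{K/\Kp}$ checked on primes away from $\mideal$, and the identification (via the Artin map for $LK/K$, whose conductor divides $\fid\calO$) of the image of $N_{K/\Kp}$ on ray class groups with the subgroup corresponding to $\operatorname{Gal}(L/L\cap K)$; the hypothesis gives $L\cap K=\Kp$, and the diagram chase finishes. The underlying class-field-theoretic input is the same --- your norm-limitation step is essentially the paper's Chebotar\"ev argument packaged as a standard theorem --- but your single commutative square treats the maximal-order part and the conductor part simultaneously, so you avoid citing the 1995 result; the cost is that you invoke, without proof, the presentation of Picard and narrow Picard groups of orders as quotients of ray class groups and the fact that conductors do not grow under base change, both standard, and your flagged verifications (the two unrelated conductors, and $N(\qid\cap R)=N_{K/\Kp}(\qid)\cap\Rp$ via extension to $\calO$ being injective on ideals prime to the conductor) are exactly the right points to check. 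One small inaccuracy: ideals prime to the conductor are invertible in \emph{any} order, so the Gorenstein property plays no role there. Note finally that the paper's adelic framework is reused in the proof of Theorem~\ref{T:B-S-orders}, where the indices $[\hat{\calO}^*\colon\Rhat^*]$ appear, which partly explains its choice of bookkeeping.
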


\begin{corollary}
\label{C:norm}
If $K/\Kp$ is ramified at a finite prime that does not divide the
conductor of $\Rp$, then the norm map  $\Pic R \to \Picplus \Rp$ is surjective.
\qed
\end{corollary}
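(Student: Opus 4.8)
The plan is to derive the corollary directly from Proposition~\ref{P:norm}: it suffices to show that, under the hypothesis of the corollary, the field $K$ cannot be isomorphic over $\Kp$ to a subextension of the ray class field $L/\Kp$, where $L$ is attached to the modulus given by the conductor $\fid$ of $\Rp$ together with all the infinite primes of $\Kp$. Granting that, Proposition~\ref{P:norm} immediately yields the surjectivity of $\Pic R \to \Picplus\Rp$.

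The one ingredient from class field theory that I would invoke is that the conductor of the ray class field $L/\Kp$ divides its defining modulus; in particular, a finite prime $\pid$ of $\Kp$ can ramify in $L/\Kp$ only if $\pid$ divides $\fid$. Since ramification indices are multiplicative in towers and are always at least $1$, a prime unramified in $L/\Kp$ is also unramified in any intermediate extension $\Kp\subseteq M\subseteq L$; hence \emph{every} subextension of $L/\Kp$ is unramified at all finite primes not dividing $\fid$. Now suppose, as in the statement of the corollary, that $K/\Kp$ is ramified at some finite prime $\pid$ with $\pid\nmid\fid$. A $\Kp$-isomorphism $K\to M$ onto an intermediate field $M$ of $L/\Kp$ would carry the primes of $K$ above $\pid$ to the primes of $M$ above $\pid$ with the same ramification indices, so $M/\Kp$ would also be ramified at $\pid$, contradicting the previous sentence. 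Therefore $K/\Kp$ is not isomorphic to a subextension of $L/\Kp$.

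Proposition~\ref{P:norm} now applies verbatim to give that the norm map $\Pic R \to \Picplus\Rp$ is surjective, which is the assertion of the corollary. There is no genuine obstacle in this deduction; the only point deserving a moment's care is the standard fact that the ray class field for the modulus $\fid$ together with the infinite primes is unramified at every finite prime not dividing $\fid$, together with the (equally standard) observation that this property is inherited by every intermediate field.
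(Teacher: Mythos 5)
Your argument is correct and is exactly the deduction the paper intends: the corollary is stated with an immediate \qed because, as you note, the ray class field $L/\Kp$ (hence every subextension) is unramified at finite primes not dividing $\fid$, so ramification of $K/\Kp$ at such a prime rules out $K$ being isomorphic to a subextension of $L/\Kp$, and Proposition~\ref{P:norm} applies. No gaps; this matches the paper's (implicit) proof.
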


\begin{proof}[Proof of Proposition~\textup{\ref{P:norm}}]
To better understand the norm map from  $\Pic R$ to $\Picplus \Rp$,
we take~\cite[\S 6]{LenstraPilaEtAl2002} 
as a model and identify the Picard groups with quotients of certain profinite groups, as follows. Let
$\Zhat$ be the profinite completion of the integers $\BZ$; that is,
$\Zhat = \varprojlim \BZ/n\BZ \cong \prod_{p} \BZ_p$.  For any ring $R$
we let $\Rhat$ denote $R\otimes_{\BZ}\Zhat$.  Then we have
\[
\Pic R \cong \Khat^*/(\Rhat^* K^*) \text{\quad and\quad}
\Picplus \Rp \cong (\hat{\Kp})^*/((\hat{\Rp})^* (\Kp)^*_{>0}),
\]
where $(\Kp)^*_{>0}$ denotes the multiplicative group of 
totally positive elements of $\Kp$.  The norm map on Picard groups
gives us an exact sequence
\[
\xymatrix{
\displaystyle\frac{\Khat^*}{\Rhat^* K^*}\ \ar[r]^(0.4){N} 
     &   \ \displaystyle\frac{(\hat{\Kp})^*}{(\hat{\Rp})^* (\Kp)^*_{>0}}\  \ar[r] 
     &   \ \displaystyle\frac{(\hat{\Kp})^*}{(\hat{\Rp})^* (\Kp)^*_{>0} N(\Khat^*)}\  \ar[r] & 1.
}
\]
Combining this with the analogous sequence for the maximal orders, we obtain the following
diagram with exact rows and columns:
\[
\xymatrix{
& &  \displaystyle\frac{(\hat{\calOp})^* (\Kp)^*_{>0} N(\Khat^*)}{(\hat{\Rp})^* (\Kp)^*_{>0} N(\Khat^*)}\ar[d]\\
\displaystyle\frac{\Khat^*}{\Rhat^* K^*}\ \ar[r]^(0.4){N}\ar[d] 
     &   \ \displaystyle\frac{(\hat{\Kp})^*}{(\hat{\Rp})^* (\Kp)^*_{>0}}\ \ar @{->}[r] \ar[d]
     &   \ \displaystyle\frac{(\hat{\Kp})^*}{(\hat{\Rp})^* (\Kp)^*_{>0} N(\Khat^*)}\ \ar[r] \ar[d] & 1\\
\displaystyle\frac{\Khat^*}{\hat{\calO}^* K^*}\ \ar[r]^(0.4){N} 
     &   \ \displaystyle\frac{(\hat{\Kp})^*}{(\hat{\calOp})^* (\Kp)^*_{>0}}\ \ar[r] 
     &   \ \displaystyle\frac{(\hat{\Kp})^*}{(\hat{\calOp})^* (\Kp)^*_{>0} N(\Khat^*)}\ \ar[r] & 1
}
\]
Let $\mideal$ be the modulus consisting of the infinite primes of $\Kp$ and the
ideal~$\fid$.
We claim that the cokernel of the map $\Pic R \to \Picplus \Rp$ is trivial, under 
the assumption that $K/\Kp$ is not isomorphic to a subextension of $L/\Kp$,
the ray class field of $\Kp$ modulo $\mideal$.
To prove this, it will suffice to show that 
the cokernel of the map $\Pic \calO \to \Picplus \calOp$ is trivial and that the
group 
\begin{equation}
\label{EQ:kernel}
\frac{(\hat{\calOp})^* (\Kp)^*_{>0} N(\Khat^*)}{(\hat{\Rp})^* (\Kp)^*_{>0} N(\Khat^*)}
\end{equation}
is trivial.

The extension $K/\Kp$ must be ramified at a finite prime, because otherwise
it would be contained in the ray class field of $\Kp$ modulo the infinite primes.
By \cite[Proposition~10.1, p.~2385]{Howe1995}, it follows that 
$\Pic \calO \to \Picplus \calOp$ is surjective.

We are left to show that the group~\eqref{EQ:kernel} is trivial. To do this,
it will suffice to show that for every $a\in(\hat{\calOp})^*$ we can 
express $a$ as the product of an element of $(\Kp)^*_{>0}$ 
and an element of $(\hat{\Rp})^*$ 
and the norm of an element of $\Khat^*$.

First let us describe the structure of the profinite groups in question.
The group $\Khat^*$ consists of all vectors $(a_\qid)_{\textup{$\qid$ of $\calO$}}$ where
each $a_\qid$ is a nonzero element of the completion $K_\qid$, and where all but finitely many $a_\qid$
lie in $\calO_\qid^*$. The group $(\hat{\calOp})^*$ consists of all vectors $(a_\pid)_{\textup{$\pid$ of $\calOp$}}$ 
where each $a_\pid$ lies in $(\calOp)_\pid^*$. The group $(\hat{\Rp})^*$ has an analogous structure, and can also be
viewed as a subgroup of $(\hat{\calOp})^*$.  For us, it will suffice to observe that 
\[
(\hat{\Rp})^* \supseteq \{ (a_\pid)_{\textup{$\pid$ of $\calOp$}} \mid 
    a_\pid \equiv 1\bmod \pid^e \textup{\ when\ } \pid^e\parallel \fid\}.
\]

Suppose we are given an element $a = (a_\pid)$ of $(\hat{\calOp})^*$.
First we choose a totally positive $x\in\calOp$ such that if $\pid^e\parallel\fid$ then $x\equiv a_\pid\bmod \pid^e$.
The ideal $x\calOp$ gives us a class $\chi$ in the ray class group modulo $\mideal$.
Because $K/\Kp$ is not isomorphic to a subextension of $L/\Kp$,
the Chebotar\"ev density theorem (applied to the extension $L\cdot K$ of $\Kp$)
shows that there is a prime $\Pid$ of $\calOp$ that splits in $K$, that does not divide~$\fid$, 
and whose image in the ray class group is $\chi^{-1}$.  This means that there
is an element $y$ of $(\Kp)^*_{>0}$ with $y \equiv 1\modstar \fid$ such that
$\calOp = xy\Pid$.

Let $\Qid$ be a prime of $K$ lying over $\Pid$, and let $b = (b_\qid)$ be the
element of $\Khat^*$ such that $b_\qid = 1$ if $\qid\ne\Qid$ and $b_\Qid = xy$.
Then $N(b)$ is the element of $(\hat{\Kp})^*$ that is equal to $1$ at every prime except $\Pid$,
where it is equal to $xy$.

We check that for all $\pid^e \parallel \fid$, the $\pid$-components of $a$ and of $xyN(b)$ are congruent
modulo $\pid^e$.  For all primes $\pid\neq\Pid$ that do not divide $\fid$, the $\pid$-component of $xyN(b)$
is a unit of $\calOp_\pid$.  Finally, the $\Pid$-component of $xyN(b)$ is equal to~$1$.  Therefore,
$a/(xyN(b))$ is an element of
\[
\{ (a_\pid)_{\textup{$\pid$ of $\calOp$}} \mid 
    a_\pid \equiv 1\bmod \pid^e \textup{\ when\ } \pid^e\parallel \fid\},
\]
which is contained in $(\hat{\Rp})^*$. Thus, our element $a$ of $(\hat{\calOp})^*$ is the product
of the element $xy$ of $(\Kp)^*_{>0}$ and the element $a/(xyN(b))$ of $(\hat{\Rp})^*$ and the norm
of the element $b$ of $\Khat^*$.

This shows that the cokernel of the map $\Pic R \to \Picplus \Rp$ is trivial.
\end{proof}


\section{Minus class numbers and discriminants}
\label{S:discriminants}

We continue to use the notation set forth at the beginning of 
Section~\ref{S:isogenyclasses}.  

Corollary~\ref{C:classgroup} shows that for the strata $\calS$
corresponding to certain convenient orders~$R$,
the number of principally polarized abelian
varieties in $\calS$
is equal either to $h_R/h_{\Rp}$ or to $(1/2)(h_R/h_{\Rp})$,
where $h_R$ is the order of the Picard group of $R$ and $h_{\Rp}$ is
the order of the Picard group of the real subring $\Rp$ of~$R$.
We denote the ratio $h_R/h_{\Rp}$ by $\hm_R$, 
as is commonly done in the case when $R$ is a maximal order, and
we call this ratio
the \emph{minus class number} of~$R$.
In the case where $R$ is a maximal order $\calO$, 
a Brauer--Siegel result for relative class numbers~\cite{Louboutin2006}
gives us an estimate --- a rough estimate, to be sure --- for the
minus class number $\hm_\calO$ in terms of the ratio $\Delta_\calO/\Delta_{\calOp}$, 
where $\Delta_\calO$ and $\Delta_{\calOp}$ are the discriminants of $\calO$ and $\calOp$.
In this section we review this result on relative class numbers and consider
the case of minus class numbers of convenient orders that are not maximal.
In the case where $R$ is the
convenient order $\BZ[\pi,\pibar]$,
we also compute an exact formula for the ratio $\Delta_R/\Delta_{\Rp}$ in terms of
the Frobenius angles of the isogeny class~$\calC$.
(This argument was sketched in~\cite{Howe2000} and given in detail 
in~\cite{GerhardWilliams2019}; we
present a derivation here for the reader's convenience.)

For CM~fields that do not contain imaginary quadratic fields, 
Louboutin gives effective lower bounds on $\hm_\calO$
that are better than the crude Brauer--Siegel approximations that we discuss here, 
but for our purposes the added value of these effective results does not
justify the complexity they would add to the discussion.
In some sense, we will be satisfied simply to justify the rough heuristic that 
``minus class numbers grow like the square root of the ratio of discriminants,''
and we will not try to quantify the known bounds on $\hm_R$ more precisely.

Let us make some remarks on the $\triplesim$ notation set in the introduction.
Recall that if $\{a_i\}$ and $\{b_i\}$ are two sequences of positive real numbers
indexed by positive integers~$i$, the expression $a_i \triplesim b_i$ means that
for every $\eps >0$ there are positive constants $r$ and $s$ 
such that $b_i \le r a_i^{1+\eps}$ and $a_i \le s b_i^{1+\eps}$ for all~$i$.
The notation is intended to capture the notion that the elements of the two
sequences grow \emph{very roughly} at the same rate. The relation $\triplesim$
is clearly symmetric and transitive. Furthermore, if we have sequences 
$\{a_i\}$, $\{b_i\}$, $\{c_i\}$, and $\{d_i\}$ with $a_i \triplesim b_i$ and
$c_i\triplesim d_i$, and if $f$ and $g$ are two functions from $\BZ_{>0}$ to
itself, then
\[
a_{f(i)} c_{g(i)} \triplesim b_{f(i)} d_{g(i)}.
\]
Note also that for sequences $\{a_i\}$ and $\{b_i\}$ that tend to infinity, 
$a_i \triplesim b_i$ if and only if $(\log a_i)/(\log b_i)\to 1$.

For a convenient order $R$, we let $\Delta_R$ and $\Delta_{\Rp}$ denote the discriminants of $R$ and $\Rp$,
respectively.

\begin{theorem}[Louboutin]
\label{T:B-S}
As $\calO$ ranges over the rings of integers of CM~fields of a given degree over $\BQ$,
we have $h_\calO^- \triplesim \sqrt{\left|\Delta_\calO / \Delta_{\calOp}\right|}$.
\end{theorem}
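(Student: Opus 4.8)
The plan is to derive the estimate from the relative analytic class number formula, reducing the problem to a two-sided bound on a single $L$-value. By a standard subsequence argument --- and since, by Hermite--Minkowski, there are only finitely many CM~fields of a fixed degree $2n = [K:\BQ]$ with $\lvert\Delta_\calO/\Delta_{\calOp}\rvert$ below any given bound (note that $\lvert\Delta_\calO\rvert \le \lvert\Delta_\calO/\Delta_{\calOp}\rvert^2$) --- it suffices to show that for any sequence of such fields with $\lvert\Delta_\calO/\Delta_{\calOp}\rvert\to\infty$ one has
\[
\log \hm_\calO \;=\; \bigl(\tfrac12 + o(1)\bigr)\,\log\bigl\lvert\Delta_\calO/\Delta_{\calOp}\bigr\rvert,
\]
since this is precisely the criterion, recalled above, for $\hm_\calO \triplesim \sqrt{\lvert\Delta_\calO/\Delta_{\calOp}\rvert}$. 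Now let $\chi$ be the quadratic Hecke character of $\Kp$ cutting out $K$, so that $\zeta_K(s) = \zeta_{\Kp}(s)\,L(s,\chi)$. Comparing residues at $s=1$ via the analytic class number formula --- using $(r_1,r_2) = (0,n)$ for $K$, $(r_1,r_2) = (n,0)$ for $\Kp$, and the classical regulator relation $\operatorname{Reg}_K = (2^{n-1}/Q)\operatorname{Reg}_{\Kp}$, where $Q\in\{1,2\}$ is the Hasse unit index of $K$ --- yields the relative class number formula
\[
\hm_\calO \;=\; \frac{Q\,w_K}{(2\pi)^n}\,\sqrt{\bigl\lvert\Delta_\calO/\Delta_{\calOp}\bigr\rvert}\;L(1,\chi),
\]
with $w_K$ the number of roots of unity in $K$. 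Since $\varphi(w_K)$ divides $2n$ and $n$ is fixed, the prefactor $Q\,w_K/(2\pi)^n$ lies between two positive constants depending only on $n$, so taking logarithms shows that it suffices to prove $\log L(1,\chi) = o\bigl(\log\lvert\Delta_\calO/\Delta_{\calOp}\rvert\bigr)$.

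Next I would observe that the conductor--discriminant formula gives $\lvert\Delta_\calO\rvert = \lvert\Delta_{\calOp}\rvert^2 \cdot N_{\Kp/\BQ}(\mathfrak{d})$, where $\mathfrak{d}$ is the relative discriminant of $K/\Kp$; since $N_{\Kp/\BQ}(\mathfrak{d})\ge 1$, this forces $\lvert\Delta_{\calOp}\rvert \le \lvert\Delta_\calO/\Delta_{\calOp}\rvert$ and hence $\log\lvert\Delta_\calO\rvert \le 2\log\lvert\Delta_\calO/\Delta_{\calOp}\rvert$. It therefore suffices to establish the cruder statement $\log L(1,\chi) = o(\log\lvert\Delta_\calO\rvert)$; that is, that for every $\eps > 0$ one has $\lvert\Delta_\calO\rvert^{-\eps} \ll_\eps L(1,\chi) \ll_\eps \lvert\Delta_\calO\rvert^\eps$ once $\lvert\Delta_\calO\rvert$ is large. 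The upper bound is elementary: truncating the Euler product of $L(s,\chi)$ at a fixed power of $\lvert\Delta_\calO\rvert$ and applying partial summation, exactly as for Dirichlet $L$-functions, gives $L(1,\chi) \ll_n (\log\lvert\Delta_\calO\rvert)^{O_n(1)}$, which is certainly $\ll_\eps \lvert\Delta_\calO\rvert^\eps$; alternatively it follows from the easy Minkowski-type estimate $h_\calO\operatorname{Reg}_K \ll_\eps \lvert\Delta_\calO\rvert^{1/2+\eps}$ via the class number formula.

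The hard part --- and the only substantial step --- is the lower bound $L(1,\chi) \gg_\eps \lvert\Delta_\calO\rvert^{-\eps}$. This is a Siegel-type estimate for the quadratic Hecke character $\chi$ of $\Kp$, and it is exactly the ineffective ingredient underlying the Brauer--Siegel theorem: it amounts to controlling a possible exceptional real zero of $\zeta_K(s) = \zeta_{\Kp}(s)L(s,\chi)$ very near $s=1$, handled classically through the Deuring--Heilbronn phenomenon and Siegel's theorem (or, when $K$ contains no imaginary quadratic subfield, through Louboutin's effective lower bounds for relative class numbers, which avoid the exceptional zero altogether). For this input I would invoke the relevant lower bound of Louboutin~\cite{Louboutin2006}; combined with the elementary upper bound and the reductions above, it yields $\log L(1,\chi) = o(\log\lvert\Delta_\calO\rvert) = o(\log\lvert\Delta_\calO/\Delta_{\calOp}\rvert)$ and hence the theorem. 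I expect this lower bound to be the main obstacle, with the remaining ingredients --- the class number formula bookkeeping, the boundedness of the prefactor, and the comparison of $\lvert\Delta_\calO\rvert$ with $\lvert\Delta_\calO/\Delta_{\calOp}\rvert$ --- being routine.
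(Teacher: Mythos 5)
Your proposal is correct, and it is in substance the same argument the paper relies on: the paper's proof is simply a citation of Louboutin's Corollaries 29 and 32 in \cite{Louboutin2006}, which themselves rest on exactly the relative class number formula, the bounded prefactor $Qw_K/(2\pi)^n$, and two-sided bounds on $L(1,\chi)$ that you spell out. Your unpacking of the reduction is accurate (including the comparison $\lvert\Delta_\calO\rvert\le\lvert\Delta_\calO/\Delta_{\calOp}\rvert^2$ and the Hermite--Minkowski finiteness step), with the ineffective Siegel/Deuring--Heilbronn lower bound on $L(1,\chi)$ remaining the same black box from Louboutin that the paper itself invokes.
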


\begin{proof}
This is a combination of~\cite[Corollary~29, p.~216]{Louboutin2006}, which
discusses normal CM~fields of arbitrary degree with root-discriminants tending to infinity,
and~\cite[Corollary~32, p.~217]{Louboutin2006}, which discusses non-normal 
CM~fields of fixed degree.
\end{proof}

\begin{theorem}
\label{T:B-S-orders}
As $R$ ranges over all convenient orders of a given degree $2n$ over $\BQ$
for which the norm map $\Pic R \to\Picplus \Rp$ is surjective,
we have $h_R^- \triplesim \sqrt{\left|\Delta_R /\Delta_{\Rp}\right|}$.
\end{theorem}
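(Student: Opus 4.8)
The plan is to deduce Theorem~\ref{T:B-S-orders} from Theorem~\ref{T:B-S} by a two-step comparison: first pass from a convenient order $R$ to the maximal order $\calO$ of its CM~field, using Louboutin's result for $\calO$; then control the discrepancies introduced by this passage, namely the ratio of class numbers $h_R^-/h_\calO^-$ and the ratio of discriminants $\sqrt{|\Delta_R/\Delta_{\Rp}|}$ versus $\sqrt{|\Delta_\calO/\Delta_{\calOp}|}$, showing that both are controlled by a power of the index $[\calO : R]$, which in turn is polynomially bounded in $|\Delta_R|$. Since $\triplesim$ swallows any factor that is polynomially bounded by (a power of) either side, this will suffice.

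\medskip

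First I would set up the comparison of class numbers. Write $f = [\calO^+ : \Rp]$ and $g = [\calO : R]$; note $f \mid g$ and $g \le f \cdot [\calO : \calO^+ R]$, and in any case $\log g = O(\log |\Delta_R|)$ because $|\Delta_R| = g^2 |\Delta_\calO|$ and $|\Delta_\calO| \ge 1$. By the standard index formula relating Picard groups of orders to those of the maximal order (Picard group of an order $R$ has order $h_\calO \cdot [\calO^* : R^*]^{-1} \cdot \#(\calO/\fid_R)^* / \#(R/\fid_R)^*$, where $\fid_R$ is the conductor, see e.g.\ Neukirch or the references already cited in the paper), we get
\[
h_R = h_\calO \cdot \frac{[\calO^* : R^*]^{-1}\, \#(\calO/\fid_R)^*}{\#(R/\fid_R)^*},
\]
and similarly for $\Rp$ versus $\calOp$. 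Taking the quotient, $h_R^-/h_\calO^-$ equals a ratio of such unit-and-conductor correction terms for $R/\Rp$. Each correction term is bounded above by $\#(\calO/\fid_R) \le N(\fid_R) \le |\Delta_R|$ and below by $1$ divided by a unit index, and the relevant unit indices $[\calO^* : R^*]$, $[\calOp{}^* : \Rp{}^*]$ are finite of size bounded by a power of the index (the torsion-free ranks agree by Dirichlet, and the torsion and the finite-index lattice quotient are both polynomially bounded). Hence $h_R^- \triplesim h_\calO^-$: more precisely, $h_\calO^-/P(g) \le h_R^- \le h_\calO^- \cdot P(g)$ for a fixed polynomial $P$, and since $g$ is polynomially bounded in $|\Delta_R|$ (hence, using $h_R^- \triplesim \sqrt{|\Delta_R/\Delta_{\Rp}|}$ once established, in either side), this respects $\triplesim$.

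\medskip

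Next I would compare the discriminant ratios. We have $\Delta_R = [\calO:R]^2 \Delta_\calO$ and $\Delta_{\Rp} = [\calOp:\Rp]^2 \Delta_{\calOp}$, so
\[
\frac{\Delta_R}{\Delta_{\Rp}} = \left(\frac{[\calO:R]}{[\calOp:\Rp]}\right)^{\!2} \frac{\Delta_\calO}{\Delta_{\calOp}},
\]
and again the prefactor $\bigl([\calO:R]/[\calOp:\Rp]\bigr)^2$ is a rational number whose numerator and denominator are polynomially bounded in $|\Delta_R|$, so $\sqrt{|\Delta_R/\Delta_{\Rp}|} \triplesim \sqrt{|\Delta_\calO/\Delta_{\calOp}|}$. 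Stringing these together with Theorem~\ref{T:B-S}, which gives $h_\calO^- \triplesim \sqrt{|\Delta_\calO/\Delta_{\calOp}|}$, and using transitivity of $\triplesim$, we conclude $h_R^- \triplesim \sqrt{|\Delta_R/\Delta_{\Rp}|}$. One subtlety worth flagging: to invoke Theorem~\ref{T:B-S} we need the maximal orders $\calO$ arising from our convenient orders $R$ to have \emph{root}-discriminant tending to infinity in the normal case, or to simply be of fixed degree in the non-normal case; since we fixed the degree $2n$, the non-normal case is immediate, and in the normal case any sequence with bounded root-discriminant is finite, hence harmless for a $\triplesim$ statement.

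\medskip

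The main obstacle I anticipate is \textbf{the bookkeeping of the unit indices and conductor correction factors}, and in particular making sure the surjectivity hypothesis on the norm map $\Pic R \to \Picplus \Rp$ is either used or shown to be unnecessary. In fact I suspect it is a red herring here — the surjectivity was needed for the \emph{counting} of polarized varieties in Theorem~\ref{T:classgroup}, but the definition $h_R^- = h_R/h_{\Rp}$ makes sense regardless, and the Brauer--Siegel comparison above never uses it; I would either drop the hypothesis from the statement or remark that it plays no role in this particular theorem. The only genuinely delicate point is confirming that $[\calO^* : R^*]$ is polynomially bounded in the index $[\calO : R]$ and hence in $|\Delta_R|$ — this follows because $R^* \supseteq 1 + \fid_R\calO$ intersected with $\calO^*$ has index in $\calO^*$ at most $\#(\calO/\fid_R)^* \le |\Delta_R|$, and similarly on the real side, so the correction is subsumed.
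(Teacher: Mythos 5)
The central gap is your absorption principle: you assert that ``$\triplesim$ swallows any factor that is polynomially bounded by (a power of) either side.'' It does not. By the paper's definition, $a_m \triplesim b_m$ requires $a_m \le s\,b_m^{1+\eps}$ and $b_m \le r\,a_m^{1+\eps}$ for \emph{every} $\eps>0$, so only discrepancies of size $b_m^{o(1)}$ can be absorbed; a correction factor that is a fixed positive power of $|\Delta_R|$ or of $[\calO:R]$ is fatal. As a result, both of your intermediate claims --- $h_R^- \triplesim h_\calO^-$ and $\sqrt{|\Delta_R/\Delta_{\Rp}|} \triplesim \sqrt{|\Delta_\calO/\Delta_{\calOp}|}$ --- are false in general: fix the CM field $K$ and let the conductor of $R$ grow (for instance, minimal orders $\BZ[\pi,\pibar]$ attached to Weil numbers of increasing $q$ lying in the same field $K$); then the maximal-order quantities stay bounded while $h_R^-$ and $\sqrt{|\Delta_R/\Delta_{\Rp}|}$ tend to infinity, roughly like $[\calO:R]/[\calOp:\Rp]$. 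The true statement only holds because the large correction on the class-number side \emph{matches} the large correction on the discriminant side up to $x^{o(1)}$ factors; they cancel, they are not absorbed. Your bookkeeping (each conductor correction between $1$ and $\#(\calO/\Fid)\le|\Delta_R|$, unit indices polynomial in the index) is too crude to see this cancellation, so the argument as written does not prove the theorem.

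The repair is precisely the step the paper takes: one must show that the conductor correction $\bigl(\#(\calO/\Fid)^*/\#(R/\Fid)^*\bigr)\big/\bigl(\#(\calOp/\fid)^*/\#(\Rp/\fid)^*\bigr)$ is $\triplesim$ the index ratio $[\calO:R]/[\calOp:\Rp] = \sqrt{|\Delta_R/\Delta_\calO|}\big/\sqrt{|\Delta_{\Rp}/\Delta_{\calOp}|}$, i.e.\ that ratios of unit-group orders of these finite rings track ratios of their cardinalities up to subpolynomial factors; this is Lenstra--Pila--Pomerance, Corollary~5.8, which the paper feeds into their exact formula (Lemma~6.3) for $\#\Pic_* R$, in which the regulator, root-of-unity, and cokernel factors are bounded in terms of $n$ alone. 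Also, the surjectivity hypothesis is not a red herring for the paper's proof: it is used, via Remark~\ref{R:lowerstar}, to identify $h_R^-$ with $\#\Pic_* R$ up to a factor of $2$ and to trivialize the cokernel term in that formula. Your route never engages with $\Pic_* R$, so your suggestion that the hypothesis ``plays no role'' is not something your argument is in a position to establish (whether it could ultimately be removed by tracking the cokernel through the formula is a separate question from the correctness of your proof).
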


\begin{proof}

Let $R$ be a convenient order in a field $K$ for which $\Pic R \to\Picplus \Rp$
is surjective, let $\calO$ be the maximal order of~$K$,
and let $\calOp$ be the maximal order of the real subfield $\Kp$.
By Remark~\ref{R:lowerstar}, we see that for this order $R$ the relative class
number $h_R^-$ is equal to either $\#\Pic_* R$ or $2\#\Pic_* R$,
so it will suffice to show that 
$\#\Pic_* R \triplesim \sqrt{\left|\Delta_R /\Delta_{\Rp}\right|}$.

Lenstra, Pila, and Pomerance show~\cite[Lemma~6.3, p.~125]{LenstraPilaEtAl2002}
that the order of $\Pic_* R$ is equal to 
\[
\#\Pic_* R = \#C \cdot \frac{w(R)}{2^n} \cdot 
\frac{h_\calO \reg\calO \cdot w(\calOp)}{h_{\calOp} \reg\calOp \cdot w(\calO)}
\cdot \frac{[\hat{\calO}^* \, \colon\,  \hat{R}^*]} {[(\hat{\calOp})^* \,\colon\, (\hat{\Rp})^*]},
\]
where $C$ is the cokernel of the norm map $\Pic R \to\Picplus \Rp$ (which is trivial in our case),
where $w(R)$, $w(\calO)$, and $w(\calOp)$ denote the number of roots of unity in
these orders, where $\reg$ denotes the regulator, and where the ``hat'' notation is
as in the proof of Proposition~\ref{P:norm}.

We note that the expression
\[
\#C \cdot \frac{w(R)}{2^n} \cdot 
\frac{\reg\calO \cdot w(\calOp)}{\reg\calOp \cdot w(\calO)}
\]
is bounded above and below in terms depending only on the degree $n$, 
so it will suffice for us to show that
\[
\frac{h_\calO}{h_{\calOp}}
\cdot \frac{[\hat{\calO}^* \, \colon\,  \hat{R}^*]} {[(\hat{\calOp})^* \,\colon\, (\hat{\Rp})^*]}
\triplesim \frac{\sqrt{\left|\Delta_R\right|}}{\sqrt{\left|\Delta_{\Rp}\right|}}.
\]

Following~\cite{LenstraPilaEtAl2002}, we let $\Fid$ be the conductor of $R$, we set 
$\fid = \Fid\cap\Rp = \Fid\cap\calOp$, and we define finite rings 
\[
A = \calO/\Fid, \quad B = \calOp/\fid\subseteq A, \quad C = R/\Fid\subseteq A, \text{\ and \ }
D = \Rp/\fid = B\cap C.
\]
Then 
\[
\frac{[\hat{\calO}^* \, \colon\,  \hat{R}^*]} {[(\hat{\calOp})^* \,\colon\, (\hat{\Rp})^*]}
= \frac{[A^*\, \colon\, C^*]}{[B^*\, \colon\, D^*]}
= \frac{\#A^* / \#C^*}{\#B^*/\#D^*},
\]
and by Corollary~5.8~\cite[p.~123]{LenstraPilaEtAl2002} and the remark following 
its proof, we have
\[
\frac{\#A^* / \#C^*}{\#B^*/\#D^*} \triplesim \frac{\#A / \#C}{\#B/\#D}
                                  = \frac{[\calO\,\colon\,R]}{[\calOp\,\colon\,\Rp]}
                 = \frac{\sqrt{\Delta_\calO/\Delta_R}}{\sqrt{\Delta_{\calOp}/\Delta_{\Rp}}}
\]
as $R$ ranges over the convenient orders of a given degree over $\BQ$.
This gives us 
\[
\frac{[\hat{\calO}^* \, \colon\,  \hat{R}^*]} {[(\hat{\calOp})^* \,\colon\, (\hat{\Rp})^*]}
\triplesim
\frac{\sqrt{\Delta_R/\Delta_\calO}}{\sqrt{\Delta_{\Rp}/\Delta_{\calOp}}},
\]
and combining this with Theorem~\ref{T:B-S} we find that
\[
\frac{h_\calO}{h_{\calOp}}
\cdot \frac{[\hat{\calO}^* \, \colon\,  \hat{R}^*]} {[(\hat{\calOp})^* \,\colon\, (\hat{\Rp})^*]}
\triplesim
\frac{\sqrt{\left|\Delta_\calO\right|}}{\sqrt{\left|\Delta_{\calOp}\right|}}
\cdot
\frac{\sqrt{\Delta_R/\Delta_\calO}}{\sqrt{\Delta_{\Rp}/\Delta_{\calOp}}}
= 
\frac{\sqrt{\left|\Delta_R\right|}}{\sqrt{\left|\Delta_{\Rp}\right|}},
\]
which, as we noted above, is enough to prove the theorem.
\end{proof}

The ring $R = \BZ[\pi,\pibar]$ from Corollary~\ref{C:minimalR} is contained
in the endomorphism ring of every abelian variety $A$ in $\calC$, and for this
$R$ there is a very nice expression of $\sqrt{\left|\Delta_R/\Delta_{\Rp}\right|}$ in
terms of Frobenius angles.

\begin{theorem}[{See~\cite[\S2]{GerhardWilliams2019}}]
\label{T:disc}
Let $0\le\theta_1\le\cdots\le\theta_d\le\pi$ be the Frobenius angles for the
isogeny class~$\calC$, and let $R$ be the ring $\BZ[\pi,\pibar]$. Then we
have
\[
\sqrt{\left|\Delta_R/\Delta_{\Rp}\right|} = 2^{n(n+1)/2} q^{n(n+1)/4} 
\prod_{i<j} (\cos \theta_i - \cos \theta_j) \prod_{i} \sin \theta_i.
\]
\end{theorem}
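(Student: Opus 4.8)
The plan is to compute both discriminants directly from explicit $\BZ$-bases. For $R = \BZ[\pi,\pibar]$ we already know (from the proof of Corollary~\ref{C:minimalR}) that a $\BZ$-basis is $\{1,\pi,\pibar,\pi^2,\pibar^2,\ldots,\pi^{n-1},\pibar^{n-1},\pi^n\}$, and that $\Rp = \BZ[\pi+\pibar]$, which has the power basis $\{1,(\pi+\pibar),\ldots,(\pi+\pibar)^{n-1}\}$. The discriminant $\Delta_{\Rp}$ is then $\prod_{i<j}(\beta_i - \beta_j)^2$ where $\beta_i = 2\sqrt q\cos\theta_i$ are the roots of the minimal polynomial of $\pi+\pibar$; similarly $\Delta_R$ is the discriminant of the chosen order basis, which one computes as the square of the determinant of the matrix of embeddings applied to the basis elements. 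The roots of $f$ are $\sqrt q\, e^{\pm i\theta_i}$, so every quantity in sight is an explicit trigonometric expression, and the theorem is an identity between two such expressions.

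The key steps, in order: First I would write down the $2n\times 2n$ matrix $M$ whose rows are indexed by the embeddings $\sqrt q e^{\pm i\theta_i}$ of $\pi$ and whose columns are the basis monomials $1,\pi,\pibar,\ldots,\pi^n$; then $\Delta_R = (\det M)^2$. Second, I would exploit the near-Vandermonde structure: columns corresponding to $\pi^k$ and $\pibar^k$ together behave like pairs of Vandermonde columns in the two conjugate roots, so after suitable row/column operations $\det M$ factors as (a power of $2\sqrt q$, coming from the pairing of conjugate embeddings) times a Vandermonde-type determinant in the quantities $\cos\theta_i$, times a residual factor $\prod_i\sin\theta_i$ arising from the "extra" column $\pi^n$ which breaks the symmetry. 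Third, I would compute $\Delta_{\Rp}$ as the ordinary Vandermonde in the $\beta_i = 2\sqrt q\cos\theta_i$, giving $\Delta_{\Rp} = 2^{n(n-1)}q^{n(n-1)/2}\prod_{i<j}(\cos\theta_i-\cos\theta_j)^2$ up to sign. Fourth, I would form the ratio, take the square root, and check that the powers of $2$ and of $q$ and the product of sines combine to exactly $2^{n(n+1)/2}q^{n(n+1)/4}\prod_{i<j}(\cos\theta_i-\cos\theta_j)\prod_i\sin\theta_i$, tracking signs to see that the absolute value is as claimed. Since $\Delta_R$ and $\Delta_{\Rp}$ are genuine discriminants of orders, $\Delta_R/\Delta_{\Rp}$ is automatically an integer multiple issue only insofar as we must confirm the quotient has the right sign pattern; taking absolute values sidesteps this.

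The main obstacle is the careful bookkeeping in the second step: disentangling the $2n\times 2n$ quasi-Vandermonde determinant built from the asymmetric basis (two copies of the power basis in $\pi$ and in $\pibar$, plus the single leftover $\pi^n$). One natural route is to change basis inside $R\otimes\BQ$ from $\{1,\pi,\ldots,\pi^{n-1},\pibar,\ldots,\pibar^{n-1},\pi^n\}$ to the "symmetrized" basis $\{1, (\pi+\pibar),\ldots,(\pi+\pibar)^{n-1}, (\pi-\pibar),(\pi^2-\pibar^2),\ldots,(\pi^{n-1}-\pibar^{n-1}), \pi^n\}$, whose discriminant differs from $\Delta_R$ only by an explicit power of $2$; with respect to this basis the matrix of embeddings block-decomposes into a real (cosine) Vandermonde block and a block involving $\pi-\pibar = 2i\sqrt q\sin\theta_i$, making the factorization transparent. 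Alternatively one can cite and follow the computation in~\cite[\S2]{GerhardWilliams2019} as indicated; but for completeness I would carry out the determinant manipulation here, as the bulk of the work, and then the final assembly of factors is routine.
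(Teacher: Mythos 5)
Your plan is correct and would work, but it takes a genuinely different route from the paper. The paper never writes down the $2n\times 2n$ embedding matrix: it uses the tower $\BZ\subset\Rp\subset R$ together with differents. Since $R=\Rp\oplus\Rp\pi$ with $\pi$ satisfying $x^2-(\pi+\pibar)x+q$ over $\Rp$, the different of $R$ is $(\pi-\pibar)$ times the different of $\Rp$, and since $\Rp=\BZ[\pi+\pibar]$ is monogenic its different is $g'(\pi+\pibar)$; taking norms gives $\lvert\Delta_R\rvert=\lvert N_{K/\BQ}(\pi-\pibar)\rvert\,\Delta_{\Rp}^2$, after which only the two easy norm computations (images $2\sqrt{q}\cos\theta_i$ and $2\sqrt{q}\,\sqrt{-1}\,\sin\theta_i$) remain. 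That buys a very short proof and avoids all determinant bookkeeping. Your direct approach is legitimate: the transition from $\{1,\pi,\pibar,\ldots,\pi^n\}$ to your symmetrized basis does have determinant $\pm 2^{\,n-1}$, and after splitting each conjugate pair of rows into sum and difference the embedding matrix does factor as you hope, because the real columns vanish on the difference rows. One caution so the bookkeeping comes out right: the second block is not just ``a residual factor $\prod_i\sin\theta_i$ from the column $\pi^n$''; it is the full $n\times n$ determinant $\det\bigl(\sin k\theta_i\bigr)_{1\le i,k\le n}$ (up to powers of $2$, $q$, and $\sqrt{-1}$), which via $\sin k\theta=\sin\theta\cdot U_{k-1}(\cos\theta)$ evaluates to $\prod_i\sin\theta_i$ times a \emph{second} Vandermonde in the cosines. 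Thus $\sqrt{\lvert\Delta_R\rvert}$ carries $\prod_{i<j}(\cos\theta_i-\cos\theta_j)^2\prod_i\sin\theta_i$, and dividing by $\sqrt{\lvert\Delta_{\Rp}\rvert}$, which carries one cosine Vandermonde, leaves exactly the factors in the statement. With that Chebyshev-determinant evaluation supplied (or an equivalent lemma), your route closes; the paper's different-based argument simply reaches the same two norms without it.
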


\begin{proof}
Clearly $R = \Rp\cdot 1 + \Rp\cdot\pi$. Arguing as in the proof of 
Proposition~\ref{P:convenient}, we find that the different of $R$ is 
$\pi-\pibar$ times the different of $\Rp$, and since $\Rp$ is generated by 
$\pi+\pibar$ we see that the different of $\Rp$ is $g'(\pi+\pibar)$, where $g$
is the minimal polynomial of $\pi+\pibar$. The discriminant ideal is the norm of
the different, so the integers $\left|\Delta_{\Rp}\right|$ and 
$\left|\Delta_R\right|$ are given by
\[
\left|\Delta_{\Rp}\right| = \left|N_{\Kp/\BQ}( g'(\pi+\pibar))\right|
\text{\quad and\quad }
\left|\Delta_R\right| = \left|N_{K/\BQ}(\pi-\pibar)\right| \Delta^2_{\Rp},
\]
and we see that
\begin{equation}
\label{EQ:quotient}
\left|\Delta_R/\Delta_{\Rp}\right| = \left|N_{K/\BQ}(\pi-\pibar)\right|
\left|N_{\Kp/\BQ}( g'(\pi+\pibar))\right|.
\end{equation}
The images of $\pi+\pibar$ under the various real embeddings of $\Kp$
into $\BR$ are 
\[\sqrt{q} e^{\theta_i} + \sqrt{q} e^{-\theta_i} = 2\sqrt{q} \cos\theta_i,\]
so 
\begin{equation}
\label{EQ:cosines}
\left|N_{\Kp/\BQ}( g'(\pi+\pibar))\right| =
2^{n(n-1)} q^{n(n-1)/2} \prod_{i<j} (\cos\theta_i-\cos\theta_j)^2.
\end{equation}
Similarly, the images of $\pi-\pibar$ in $\BC$ are the values
\[
\sqrt{q} e^{\theta_i} - \sqrt{q} e^{-\theta_i} = 2\sqrt{q} \sqrt{-1} \sin\theta_i
\]
and their complex conjugates, so
\begin{equation}
\label{EQ:sines}
\left|N_{K/\BQ}(\pi-\pibar)\right|
= 2^{2n} q^{n} \prod_{i} \sin^2\theta_i.
\end{equation}
Combining Equations~\eqref{EQ:quotient}, \eqref{EQ:cosines}, and~\eqref{EQ:sines},
we find that
\[
\left|\Delta_R/\Delta_{\Rp}\right| = 
2^{n(n+1)} q^{n(n+1)/2} 
\prod_{i<j} (\cos\theta_i-\cos\theta_j)^2
\prod_{i} \sin^2\theta_i,
\]
and the theorem follows.
\end{proof}

We note that Theorem~\ref{T:sequence} follows from Corollary~\ref{C:classgroup},
Theorem~\ref{T:B-S-orders}, and Theorem~\ref{T:disc}.


\section{Isogeny classes containing many principally polarized varieties}
\label{S:warnings}

Suppose $\calC$ is an isogeny class of simple ordinary abelian varieties 
over~$\Fq$, corresponding to a Weil number~$\pi$, and let $R = \BZ[\pi,\pibar]$
be the minimal ring of~$\calC$. We say that an abelian variety in $\calC$
\emph{has minimal endomorphism ring} if its endomorphism ring is~$R$.

We saw in Corollary~\ref{C:minimalR} that $R$ is a convenient order, so
Corollary~\ref{C:norm} and Corollary~\ref{C:classgroup} show that under
a mild hypothesis, the number of principally polarized varieties in $\calC$ with
minimal endomorphism ring is either $\hm_R$ or $\hm_R/2$, where $\hm_R$ is the 
minus class number of~$R$. Then Theorems~\ref{T:B-S-orders} and~\ref{T:disc} 
say that that this number is \emph{very} roughly on the order of
\[
q^{n(n+1)/4}
\prod_{i<j} (\cos \theta_i - \cos \theta_j)  \prod_{i} \sin \theta_i,
\]
where the $\theta_i$ are the Frobenius angles for the isogeny class.
Since this is of the same order as the average number of principally polarized
varieties with Frobenius angles near $(\theta_1,\ldots,\theta_n)$ given by
Equation~\eqref{EQ:average}, one might be tempted to think that the principally polarized
varieties with minimal endomorphism ring account for a nontrivial fraction 
of the principally polarized varieties in~$\calC$.

The goal of this short section is simply to demonstrate that one should not
succumb to this temptation. Indeed, even for isogeny classes of elliptic curves,
the number of curves with minimal endomorphism ring can be a vanishingly
small fraction of the curves in the isogeny class.

\begin{theorem}
For every $\eps>0$, there is an isogeny class $\calC$ of ordinary elliptic curves
over a finite field such that the fraction of curves in $\calC$ with minimal
endomorphism ring is less than~$\eps$.
\end{theorem}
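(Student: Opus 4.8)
The plan is to realize $\calC$ as an isogeny class of ordinary elliptic curves over a prime field $\Fp$. Such a class is determined by a Weil number $\pi$ with $\pi+\pibar=t$ and $\pi\pibar=p$, where $t$ is an integer with $p\nmid t$ and $t^2<4p$, and $K:=\BQ(\pi)$ is an imaginary quadratic field. Its minimal ring is $\BZ[\pi,\pibar]=\BZ[\pi]$ (as $\pibar=t-\pi$), which is monogenic --- hence Gorenstein --- with discriminant $D:=t^2-4p$; write $D=\Delta_\calO c^2$, where $\calO$ is the maximal order of $K$, $\Delta_\calO$ its discriminant, and $c$ the conductor of $\BZ[\pi]$. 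Every order of $K$ containing $\BZ[\pi]$ is one of the orders $\calO_f$ with $f\mid c$, each of them monogenic (hence Gorenstein) and each occurring as the endomorphism ring of some curve in $\calC$ --- via Deligne's equivalence, the Deligne module $\calO_f$ itself gives such a curve. By Deligne's equivalence, as recalled in the proof of Theorem~\ref{T:classgroup}, the curves in $\calC$ with endomorphism ring $\calO_f$ correspond to the classes of fractional $\calO_f$-ideals $\Aid$ with $\End\Aid=\calO_f$, and since $\calO_f$ is Gorenstein these are exactly the invertible $\calO_f$-ideal classes, of which there are $h_{\calO_f}:=\#\Pic\calO_f$. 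Hence the number of curves in $\calC$ equals $\sum_{f\mid c}h_{\calO_f}$, the number of those with minimal endomorphism ring equals $h_{\calO_c}$, and the quantity to be bounded is $h_{\calO_c}\big/\sum_{f\mid c}h_{\calO_f}$.

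Next I would estimate this ratio using the class-number formula
\[
h_{\calO_f}=\frac{f\,h_\calO}{[\calO^{*}:\calO_f^{*}]}\prod_{\ell\mid f}\Bigl(1-\frac{\chi(\ell)}{\ell}\Bigr),
\]
where $\chi$ is the quadratic character of $K$ (so $\chi(\ell)\in\{-1,0,1\}$). From this one reads off, for every prime $\ell$ and every $f\ge1$, the inequality $h_{\calO_{\ell f}}\le(\ell+1)\,h_{\calO_f}$: the unit-index quotient is at most $1$ because $\calO_{\ell f}^{*}\subseteq\calO_f^{*}$, and the Euler factor is at most $1+1/\ell$; this bound is uniform in $K$, including $\BQ(\sqrt{-1})$ and $\BQ(\sqrt{-3})$. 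Telescoping, for every set $T$ of distinct primes dividing $c$, with $f_T:=c/\prod_{\ell\in T}\ell$ a divisor of $c$, we get $h_{\calO_c}\le\bigl(\prod_{\ell\in T}(\ell+1)\bigr)h_{\calO_{f_T}}$. The divisors $f_T$ are pairwise distinct, so summing over all subsets $T$ of the set of odd primes dividing $c$ yields
\[
\sum_{f\mid c}h_{\calO_f}\ \ge\ \sum_T h_{\calO_{f_T}}\ \ge\ h_{\calO_c}\sum_T\prod_{\ell\in T}\frac1{\ell+1}\ =\ h_{\calO_c}\prod_{\substack{\ell\mid c\\ \ell\ \text{odd}}}\Bigl(1+\frac1{\ell+1}\Bigr),
\]
so the fraction of curves in $\calC$ with minimal endomorphism ring is at most $\prod_{\ell\mid c,\ \ell\ \text{odd}}(1+1/(\ell+1))^{-1}$.

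It remains to construct, for a given $\eps>0$, an isogeny class whose conductor has many small odd prime factors. Let $\ell_1<\dots<\ell_k$ be the first $k$ odd primes; since $\sum_\ell 1/\ell$ diverges over the primes, the products $\prod_{i=1}^k(1+1/(\ell_i+1))$ tend to infinity, so I fix $k$ with $\prod_{i=1}^k(1+1/(\ell_i+1))>1/\eps$. Set $N=(\ell_1\cdots\ell_k)^2$, and by Dirichlet's theorem choose a prime $p\equiv1\pmod N$ (so $p>3$). Take $t=2$: then $p\nmid t$, $0<t<2\sqrt p$, and $D=t^2-4p=-4(p-1)\ne0$, so by Honda--Tate theory \cite{Waterhouse1969} there is an isogeny class $\calC$ of ordinary elliptic curves over $\Fp$ with trace $t$. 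Since $p\equiv1\pmod N$ we have $\ell_i^2\mid p-1$, hence $\ell_i^2\mid 4(p-1)=-D$; as $\ell_i$ is odd and $v_{\ell_i}(D)=v_{\ell_i}(\Delta_\calO)+2\,v_{\ell_i}(c)$ with $v_{\ell_i}(\Delta_\calO)\in\{0,1\}$, this forces $\ell_i\mid c$. Thus $\ell_1\cdots\ell_k\mid c$, and by the second paragraph the fraction of curves in $\calC$ with minimal endomorphism ring is at most $\prod_{i=1}^k(1+1/(\ell_i+1))^{-1}<\eps$, as required.

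The genuinely delicate steps are the two counting/estimation facts: the identification of the number of curves in $\calC$ with endomorphism ring $\calO_f$ with $\#\Pic\calO_f$ (supplied directly by Deligne's equivalence together with the Gorenstein property of $\calO_f$), and the uniform class-number inequality $h_{\calO_{\ell f}}\le(\ell+1)\,h_{\calO_f}$. Once those are in place, exhibiting the base field costs nothing --- Dirichlet's theorem plus the trick of taking $t=2$, which pins $D$ down to $-4(p-1)$, forces as many small odd primes into the conductor as one likes.
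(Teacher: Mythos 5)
Your proof is correct, and its skeleton is the same as the paper's: count the curves in $\calC$ by summing class numbers of the orders between $\BZ[\pi]$ and $\calO$, bound the fraction with minimal endomorphism ring by a product over primes dividing the conductor that tends to $0$, then construct isogeny classes whose conductor is divisible by many small primes. The differences are in the execution, and both are worth noting. First, your telescoping inequality $h_{\calO_{\ell f}}\le(\ell+1)h_{\calO_f}$, which absorbs the unit index $[\calO^*:\calO_f^*]$, yields in the end the same bound $\prod_{\ell}\frac{\ell+1}{\ell+2}$ (over odd $\ell\mid c$) that the paper gets by evaluating the full sum $\sum_{f\mid F}h(f^2\Delta_0)$ exactly, but without the paper's need to set aside the discriminants $-3$ and $-4$ ``for ease of exposition.'' Second, your construction is more elementary: taking $t=2$ pins the discriminant to $-4(p-1)$, and Dirichlet's theorem with $p\equiv 1\pmod{(\ell_1\cdots\ell_k)^2}$ forces each odd $\ell_i$ into the conductor, whereas the paper prescribes a target integer $m$, takes a prime represented by $x^2+m^2|\Delta_0|y^2$ (citing Cox) and trace $t=2x$ to get conductor $my$. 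Your route buys simplicity of the prime-existence input; the paper's buys the slightly stronger statement that the conductor can be made divisible by \emph{any} prescribed $m$ with \emph{any} prescribed fundamental part $\Delta_0$. One small presentational caveat: your count of curves with $\End=\calO_f$ as $\#\Pic\calO_f$ via Deligne modules and the Gorenstein property is fine, but you could equally cite the classical Deuring/Schoof statement (as the paper does) and skip that discussion entirely.
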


\begin{proof}
Let $\calC$ be an isogeny class of ordinary elliptic curves over~$\Fq$, say with
trace~$t$, and let $\Delta = t^2 - 4q$, so that $\Delta$ is the discriminant
of the ring $R = \BZ[\pi,\pibar]$, where $\pi$ is a root of $x^2 - tx + q$.
Write $\Delta = F^2 \Delta_0$ for a fundamental discriminant $\Delta_0$, and for
ease of exposition let us suppose that $\Delta_0$ is neither $-3$ nor~$-4$.

The number of elliptic curves in $\calC$ is equal to the Kronecker class
number $H(\Delta)$ of $\Delta$ (see~\cite[Theorem~4.6, pp.~194--195]{Schoof1987}), 
which is the sum of the class numbers of all orders that contain~$R$:
\[
H(\Delta) = \sum_{f \mid F} h(f^2 \Delta_0 ).
\]
Let $\chi$ be the quadratic character modulo $\Delta_0$. Since the only roots of
unity in the order of discriminant $\Delta_0$ are~$\pm1$, we have
\[
h(f^2 \Delta_0) = h(\Delta_0) f \prod_{p\mid f}\left(1 - {\textstyle\frac{\chi(p)}{p}}\right),
\]
so that
\begin{align}
\notag 
H(\Delta) &= h(\Delta_0)  \sum_{f\mid F} f \prod_{p\mid f}\left(1 - {\textstyle\frac{\chi(p)}{p}}\right)\\
\notag           
          &= h(\Delta_0) \prod_{p^e \parallel F} \left(1 + \left(1 - {\textstyle\frac{\chi(p)}{p}}\right)(p + \cdots + p^e)\right)\\
\intertext{and}
\notag 
\frac{H(\Delta)}{h(\Delta)} 
          & = \prod_{p^e \parallel F}\left(p^{-e} \left(1 - {\textstyle\frac{\chi(p)}{p}}\right)^{-1}
                            + 1 + \frac{1}{p} + \cdots + \frac{1}{p^{e-1}}\right)\\
\notag
          &\ge \prod_{p^e \parallel F}\left( \frac{1}{p^{e-1}(p+1)} + \frac{p^e-1}{p^{e-1}(p-1)}\right)\\
\notag          
          &\ge \prod_{p\mid F}\left( \frac{p+2}{p+1}\right)\\
\intertext{so}          
\label{hoverH}
\frac{h(\Delta)}{H(\Delta)} 
          &\le \prod_{p\mid F}\left( \frac{p+1}{p+2}\right).
\end{align}          
Now, the product $\prod_p \big( \frac{p+1}{p+2}\big)$ diverges to $0$, so to prove the
theorem we need only show that for every integer~$m>0$, there are isogeny classes 
$\calC$ for which the conductor of the minimal endomorphism ring 
is divisible by~$m$.

Suppose we are given an $m>0$. Let $\Delta_0<-4$ be a fundamental discriminant
and let $n = m^2|\Delta_0|$.  Let $p$ be a prime of the form
$x^2 + ny^2$ (see~\cite[Theorem 9.2, p.~163]{Cox2013}), and let $t = 2x$.
Then $t^2 < 4p$ and $p\nmid t$, so by a result of Deuring (see~\cite[Theorem~4.2, p.~193]{Schoof1987})
there is an isogeny class of elliptic curves over $\Fp$ with trace~$t$.
We see that the discriminant $\Delta$ of this isogeny class is
\[
\Delta = t^2 - 4p = 4x^2 - 4(x^2 + m^2|\Delta_0| y^2) = m^2 y^2 \Delta_0,\]
so the conductor for the minimal endomorphism ring is~$my$, and is divisible by~$m$,
as we wished to show.
\end{proof}


\section{Examples}
\label{S:examples}

In this section, we give three families of strata of abelian surfaces such that,
in the notation of Theorem~\ref{T:sequence}, we do \emph{not} have
$P_m \triplesim q_m^{3/2}$, but instead have $P_m \triplesim q_m^{5/4}$ (for the
first family), 
$P_m \triplesim q_m$ (for the second), 
and $P_m \triplesim q_m^{1/2}$ (for the third). This shows that
the trigonometric factors in Theorem~\ref{T:sequence} are essential.

We repeatedly use the fact that if a polynomial of the shape
$f = x^4 + ax^3 + bx^2 + aqx + q^2$
is irreducible and defines a CM~field, where $q$ is a power of a prime 
and where the middle coefficient $b$ is coprime to $q$, then $f$ is the Weil polynomial
of an isogeny class of ordinary abelian surfaces over $\Fq$; see~\cite[\S~3]{Howe1995}.

\begin{example}
\label{EX:small}
For every prime $p$ that is congruent to $7$ modulo~$8$, let $a_p$ be the
largest integer less than $\sqrt{p} - 1$, and let $f_p$ be the
polynomial
\[f_p = x^4 - 2 a_p x^3 + (a_p^2 + p)x^2 - 2 a_p p x + p^2.\]
We claim that  $f_p$ is the Weil polynomial of a simple ordinary 
isogeny class over~$\Fp$. Since the middle coefficient of $f_p$ is
clearly coprime to~$p$, it will be enough for us to show that the algebra
$K = \BQ[x]/(f_p)$ is a CM~field. 

Let $\pi$ be the image of the polynomial variable $x$ in~$K$ and let 
$\pibar = p/\pi$. We check that $\alpha := \pi+\pibar$ satisfies 
$\alpha^2 - 2a_p\alpha + a_p^2 - p = 0$,
so that $\alpha = a_p + s$ where $s^2 = p$.
Therefore the algebra $K$ contains the quadratic field $\Kp = \BQ(\sqrt{p})$.
In fact, $K$ is the extension of $\Kp$ obtained by adjoining a root of $y^2 - \alpha y + p$, so to 
show that $K$ is a CM~field we just need to show that $\alpha^2 - 4p$ is 
totally negative. But this is clear, because under the two embeddings of $\Kp$ into $\BR$ the
element $\alpha^2$ gets sent to real numbers smaller than~$4p$. 
Thus, $f_p$ is the Weil polynomial of a simple ordinary isogeny class  $\calC_p$ of
abelian surfaces over~$\Fp$.

Let $R_p = \BZ[\pi,\pibar]$ be the minimal ring for  $\calC_p$.
As we have just seen, $\Rp_p$ contains a square root of $p$ and is therefore
the maximal order of~$\Kp$.

We claim that the extension $K/\Kp$ is ramified at an odd prime. We prove this
by contradiction: Since $K$ is obtained from $\Kp$ by adjoining a square root
of $\alpha^2-4p$, if $K/\Kp$ were unramified at all odd primes then the norm
of $\alpha^2 - 4p$ would be either a square or twice a square. We compute
that 
\[
N_{\Kp/\BQ}(\alpha^2 - 4p) = (p - a_p^2)(9p - a_p^2),
\]
and since $a_p$ is coprime to $p$, the greatest common divisor of the 
two factors is a divisor of~$8$. Thus, if this norm were a square or twice a 
square, each factor would also be. But there is no integer $b$ such that
$p = a_p^2 + b^2$  or $p = a_p^2 + 2b^2$, because $p\equiv 7 \bmod 8$.
Therefore, $K/\Kp$ is ramified at an odd prime.

Let $\calS_p$ be the minimal stratum of $\calC_p$ and let $P_p$ be the number of
isomorphism classes of principally polarized varieties in~$\calS_p$. Since $R_p$
is the minimal order of $\calC_p$, it is convenient by 
Corollary~\ref{C:minimalR}. Since $\Rp_p$ is the maximal order of $\Kp$ it has
trivial conductor, and since $K/\Kp$ is ramified at an odd prime, 
Corollary~\ref{C:norm} tells us that the norm map $\Pic R_p\to\Picplus \Rp_p$ is
surjective. Then from Corollary~\ref{C:classgroup} we find that $P_p = \hm_{R_p}$,
the minus class number of~$R_p$.

As we noted at the beginning of the proof of Theorem~\ref{T:disc}, we have
\[\big|\Delta_{R_p}\big| = \left|N_{K/\BQ}(\pi-\pibar)\right| \Delta^2_{\Rp_p}.\]
Since $\left|N_{K/\BQ}(\pi-\pibar)\right| = N_{\Kp/\BQ}(\alpha^2 - 4p),$
we find that
$\big|\Delta_{R_p}\big| = (p - a_p^2)(9p - a_p^2) (4p)^2$
and 
\[\big|\Delta_{R_p}/\Delta_{\Rp_p}\big| = 4 p (p - a_p^2)(9p - a_p^2).\]
If we write $a_p = \sqrt{p} - \eps$ for a real number $\eps$ in the interval~$(1,2)$,
then
\[\big|\Delta_{R_p}/\Delta_{\Rp_p}\big| = 4 p (2 \eps \sqrt{p} -  \eps^2) (8p + 2 \eps \sqrt{p} -  \eps^2),\]
so we have 
\[
32 p^{5/2} < \big|\Delta_{R_p}/\Delta_{\Rp_p}\big| < 144 p^{5/2}
\]
Thus, Theorem~\ref{T:B-S-orders} says that as $p\to\infty$ we have $P_p\triplesim p^{5/4}.$
\end{example}

\begin{example}
\label{EX:smaller}
For every prime $p$ that is congruent to $7$ modulo~$8$, we let $f_p$ be the
polynomial
\[f_p = x^4 + x^3 + (2p-1)x^2 + px + p^2.\]
Again we claim that the polynomial $f_p$ is the Weil polynomial of a simple ordinary 
isogeny class $\calC_p$ over~$\Fp$, and since its middle coefficient is 
visibly coprime to~$p$, all we must show is that the algebra $K = \BQ[x]/(f_p)$ is a CM~field.

Let $\pi$ be the image of the polynomial variable $x$ in~$K$, let 
$\pibar = p/\pi$, and let $\alpha = \pi+\pibar$. We calculate that $\alpha^2 + \alpha - 1 = 0$,
so $K$ contains the quadratic field $\Kp = \BQ(\sqrt{5})$. We obtain $K$ from 
$\Kp$ by adjoining a root of $y^2 - \alpha y + p$, and since the discriminant
$\alpha^2 - 4p$ is totally negative (because the images of $\alpha^2$ in the real numbers
are both smaller than~$3$), $K$ is a CM~field and our claim is verified.

There are only two totally imaginary quadratic extensions of $\BQ(\sqrt{5})$ 
that are not ramified at an odd prime, namely $\BQ(\sqrt{5},\sqrt{-1})$ 
and~$\BQ(\sqrt{5},\sqrt{-2})$. One checks that every prime of $\BQ(\sqrt{5})$ 
lying over $p$ is inert in both of these extensions, because 
$p \equiv 7 \bmod 8$, so neither of these CM~fields contains an element 
$\beta$ with $\beta\betabar = p$. In particular, since $\pi\pibar = p$,
we see that $K$ is neither of these fields, so $K/\Kp$ is ramified at an odd
prime.

Let $R_p$ and $\calS_p$ be the minimal ring and minimal stratum of $\calC_p$,
and let $P_p$ be the number of isomorphism classes of principally polarized 
varieties in~$\calS_p$. The ring $R_p$ is convenient by 
Corollary~\ref{C:minimalR}. The real order $\Rp_p$ is maximal and so has trivial conductor,
and since $K/\Kp$ is ramified at an odd prime, we again find from Corollary~\ref{C:norm} 
that the norm map $\Pic R_p\to\Picplus \Rp_p$ is surjective.
The ramification of $K/\Kp$ at an odd prime, together with Corollary~\ref{C:classgroup},
tells us that $P_p = \hm_{R_p}$.

We have
\[\big|\Delta_{R_p}\big| = \left|N_{K/\BQ}(\pi-\pibar)\right| \Delta^2_{\Rp_p}
                            = (16p^2 - 12p + 1) \cdot 25\]
so that 
\[\big|\Delta_{R_p}/\Delta_{\Rp_p}\big| = 5(16p^2 - 12p + 1).\]
Theorem~\ref{T:B-S-orders} then tells us that as $p\to\infty$ we have $P_p\triplesim p$.
\end{example}                            

\begin{example}
\label{EX:smallest} 
For every prime $p$ that is congruent to $7$ modulo~$8$, let $c_p$ be the
largest integer less than $2\sqrt{p} - 1$, and let $f_p$ be the polynomial
\[f_p = x^4 + (1 - 2c_p) x^3 + (2p + c_p^2 - c_p -1)x^2 + p(1 - 2c_p)x + p^2.\]
Once more we claim that $f_p$ is the Weil polynomial of a simple ordinary isogeny
class $\calC_p$ over~$\Fp$, and again we prove this by showing that the algebra
$K = \BQ[x]/(f_p)$ is a CM~field and that the middle coefficient of $f_p$ is
coprime to~$p$.

Let $\pi$ be the image of the polynomial variable $x$ in~$K$, let 
$\pibar = p/\pi$, and let $\alpha = \pi+\pibar$. We check that
\[\alpha^2 - (2c_p-1)\alpha + (c_p^2 - c_p - 1) = 0,\]
so once again we find that $K$ contains the quadratic field 
$\Kp = \BQ(\sqrt{5})$. In fact, $\alpha = c_p - \varphi$, where $\varphi \in \Kp$ satisfies
$\varphi^2 - \varphi - 1 = 0$. The algebra $K$ is obtained from $\Kp$ by
adjoining a square root of $\alpha^2 - 4p$.
This quantity is totally negative, so $K$ is a CM~field. 

For $p<100$, explicit computation shows that the middle coefficient of $f_p$ is coprime to~$p$.
For $p>100$, we write $c_p = 2\sqrt{p} - \eps$ for a real number $\eps$ in the
interval $(1,2)$, and we compute that the middle coefficient of $f_p$ is equal to
\[6p  - (4\eps+2)\sqrt{p} + \eps^2  + \eps - 1.\]
This lies strictly between $5p$ and~$6p$, so the middle
coefficient is not a multiple of~$p$ for these primes as well.
Thus $f_p$ is the Weil polynomial of a simple ordinary isogeny class $\calC_p$ 
of abelian surfaces over~$\Fp$.

Let $R_p$ be the minimal order of $\calC_p$.
Arguing as in Example~\ref{EX:smaller} we find that $\Rp_p$ is the maximal order
of~$\BQ(\sqrt{5})$ and that $K/\Kp$ is ramified at an odd prime. If we let $P_p$
denote the number of isomorphism classes of principally polarized varieties 
in the minimal stratum of $\calC_p$, then once again we have that $P_p = \hm_{R_p}$.

Writing $c_p = 2\sqrt{p} - \eps$ for some $\eps\in(1,2)$, we compute that
\begin{align*}
\left| N_{K/\BQ}(\pi-\pibar)\right| 
  &= N_{\Kp/\BQ}(\alpha^2 - 4p)\\
  &= c_p^4 - 2c_p^3 - (8 p + 1) c_p^2  + (8 p + 2) c_p + (16 p^2 - 12 p + 1)\\
  &= 16 (\eps^2 + \eps - 1) p 
      - 4 (2\eps^3 +3\eps^2 - \eps  - 1)\sqrt{p}
      + (\eps^4 + 2\eps^3 - \eps^2 - 2\eps + 1).
\end{align*}
If we view this expression as a function of $\eps$, we find that the extreme
values of the function on the interval $[1,2]$ are attained at the endpoints,
and it follows that 
\[
16p - 12\sqrt{p} + 1 < \left| N_{K/\BQ}(\pi-\pibar)\right| < 80p - 100\sqrt{p} + 25.
\]
From this we see that for $p > 144$ we have
\[75 p < \big|\Delta_{R_p}/\Delta_{\Rp_p}\big| < 400 p,\]
so Theorem~\ref{T:B-S-orders} says that as $p\to\infty$ we have
$P_p\triplesim p^{1/2}$.
\end{example}       

\begin{remark}
In Example~\ref{EX:small}, one of the two Frobenius angles of the isogeny classes
approaches $0$, while the other remains near $\pi/2$. In Example~\ref{EX:smaller},
the two Frobenius angles both approach $\pi/2$. And in Example~\ref{EX:smallest},
both Frobenius angles approach $0$.
\end{remark}


\nocite{DiPippoHowe2000}
 
\bibliographystyle{hplaindoi}
\bibliography{frobdist}


\section*{Appendix: Letter to Nick Katz}

We attach here the letter from the author to Nick Katz mentioned in the introduction.
We have corrected two typographical errors, but have left the 
letter otherwise unchanged.

\bigskip

\begin{verbatim}
Date: Thu, 26 Oct 2000 11:48:55 -0700 (PDT)
From: "Everett W. Howe" <however@alumni.caltech.edu>
To: Nick Katz <nmk@math.princeton.edu>
Subject: Frobenius eigenvalue distribution

Dear Nick,

I've come up with a pretty simple heuristic argument "explaining"
the limiting distribution of Frobenius eigenvalues for principally
polarized abelian varieties over finite fields, and I wonder whether
you and Sarnak had thought of this as well.  I'm not optimistic about
the chances of turning the argument into an actual proof --- it uses
the Brauer-Siegel theorem at a certain point, and loses a lot of
accuracy at that point --- but it probably can be made to prove a
weaker version of your theorem with Sarnak.

Of course you know of the following heuristic for elliptic curves:
The number of elliptic curves with trace t over F_q is equal to
the class number of the imaginary quadratic order of discriminant
Delta = t^2 - 4*q, and you expect that the class number will be
about Sqrt(|Delta|), so you expect to get the familiar semi-circular
distribution of traces.  What I can do is generalize this heuristic
to higher dimensions.

Here's the argument:

Consider an isogeny class C of n-dimensional abelian varieties over
a finite field F_q.  Let's assume that C is *ordinary* and that C is
*simple*.  Then the Weil polynomial associated to C is an irreducible
polynomial f of degree 2n.  Let pi be a root of f in Qbar, let K be
the CM-field Q(pi), and let K+ be the maximal real subfield of K.
Let pibar be the complex conjugate of pi.

Consider the order R = Z[pi, pibar] of K and the order R+ = Z[pi+pibar]
of K+.  I want to make the following assumptions:

    1. R is the maximal order of K 
        (which implies that R+ is the maximal order of K+);
    2. K is ramified over K+ at a finite prime;
    3. The unit group of R is equal to the unit group of R+.

(It will still be possible to say something when these assumptions
are not met, but they make life easier, and they are not too 
unreasonable.)

Now, the abelian varieties in the isogeny class C correspond (via
Deligne's paper [Invent. Math. 8 1969 238--243]) to the isomorphism
classes of those finitely-generated R-modules that can be embedded 
as lattices in K.  Our first assumption implies that the isomorphism 
classes of these R-modules are simply the ideal classes of K.

My thesis [Trans. Amer. Math. Soc. 347 (1995) 2361--2401] shows that
there is an ideal class J in the narrow class group of K+ with the
following property:  An ideal class I of K corresponds to an abelian
variety that has a principal polarization if and only if N(I) = J,
where N is the norm from the class group of K to the narrow class group
of K+.

Our second assumption shows that this norm map is surjective, so we
find that the number of abelian varieties in C that have a principal
polarization is equal to the quotient h(K) / h+(K+) of the class number
of K by the narrow class number of K+.

Now, if an abelian variety A in C has at least one principal
polarization, then the number of non-isomorphic principal polarizations
on A is equal to the index

   [(totally positive units of K+) : (norms of units of K)]

and our third assumption shows that this index is equal to

   [(totally positive units of K+) : (squares of units of K+)],

and this index is equal to the quotient of the narrow class number
of K+ by the regular class number of K+.

Thus, the number of principally-polarized varieties (A,lambda) with
A in C is equal to the quotient h(K)/h(K+).


Now let's use Brauer-Siegel.  It is easy to show that the discriminant
of the ring R = Z[pi,pibar] is equal (up to sign) to the norm from K
to Q of (pi - pibar) times the square of the discriminant of R+.
Thus, the ratio of Delta(R) to Delta(R+) is equal to

   N(pi-pibar)*Delta(R+).

Let pi_1, ..., pi_n, pibar_1, ..., pibar_n be the images of pi in the
complex numbers, and let theta_1, ..., theta_n be the corresponding
arguments.  Then N(pi-pibar) is equal to a certain power of q times a 
certain power of 2 times the product

   \prod_{i} (\sin\theta_i)^2

and the discriminant of R+ = Z[pi+pibar] is equal to a certain power of
q times a certain power of 2 times the product

   \prod_{i<j} (\cos\theta_i - \cos\theta_j)^2 .

Now, from Brauer-Siegel we *expect* that 

   h(K)/h(K+) is about Reg(K+)/Reg(K) * Sqrt(Delta(K)/Delta(K+)).

Now, Reg(K+)/Reg(K) is just 2^(1-n) because of our third assumption,
so up to a certain power of q and a certain power of 2, we have

   h(K)/h(K+) is about Sqrt(N(pi-pibar)*Delta(R+))

        which is about the absolute value of

   \prod_{i} (\sin\theta_i) * \prod_{i<j} (\cos\theta_i - \cos\theta_j).

========================================================================

OK.  So the above function is giving us the distribution of PPAVs with
respect to Lebesgue measure on the "coefficient space" of the first n
coefficients of the characteristic polynomials of Frobenius.  It's a
nice little exercise to show that changing from this measure to Lebesgue 
measure on the space of Frobenius angles theta_i introduces a Jacobian 
factor proportional to

   \prod_{i} (\sin\theta_i) * \prod_{i<j} (\cos\theta_i - \cos\theta_j).

So, taking the product of the Jacobian factor with the distribution
function given above, we find that the distribution of Frobenius angles
with respect to Lebesgue measure on the angle-space is given by 
something proportional to

 \prod_{i} (\sin\theta_i)^2 * \prod_{i<j} (\cos\theta_i - \cos\theta_j)^2.


========================================================================


As I mentioned, it doesn't seem likely that this could be made more
rigorous (because of the Brauer-Siegel step), but probably it could
be made to prove something about (say) the log of the Frobenius angle
distribution.

In any case, I think it does shed some light on where the distribution
is coming from --- the distribution is simply reflecting the relative 
sizes of a couple of discriminants that naturally occur when considering
abelian varieties.

========================================================================

What do you think of all this?

All the best,

Everett


________________________________________________________________________
Everett Howe                          Center for Communications Research
however@alumni.caltech.edu                           4320 Westerra Court
http://alumni.caltech.edu/~however                  San Diego, CA  92121
\end{verbatim}

\bigskip

\end{document}